\newtheorem{thm}{Theorem}[section]
\newtheorem{prop}[thm]{Proposition}
\theoremstyle{definition}
\theoremstyle{remark}
\newcommand{\hs}{\mathbb{R}^d_+}
\newcommand{\loc}{\textup{loc}}
\newcommand{\glob}{\textup{glob}}
\newcommand{\RH}{\textup{RH}}
\DeclareMathOperator{\Real}{Re}
\newcommand*\mathinhead[2]{\texorpdfstring{$#1$}{#2}}
\numberwithin{equation}{section}
\setlist[enumerate,1]{label=(\alph*)}
\newcounter{BDQ}
\begin{document}

    \title[Variation operators associated with semigroups generated by...]
    {Variation operators associated with semigroups generated by Hardy operators involving fractional Laplacians in a half space}

    \author[J. J. Betancor]{Jorge J. Betancor}
    \address{Jorge J. Betancor\newline
        Departamento de Análisis Matemático, Universidad de La Laguna,\newline
        Campus de Anchieta, Avda. Astrofísico Sánchez, s/n,\newline
        38721 La Laguna (Sta. Cruz de Tenerife), Spain}
    \email{jbetanco@ull.es}

    \author[E. Dalmasso]{Estefanía Dalmasso}
    \address{Estefanía Dalmasso, Pablo Quijano\newline
        Instituto de Matemática Aplicada del Litoral, UNL, CONICET, FIQ.\newline Colectora Ruta Nac. Nº 168, Paraje El Pozo,\newline S3007ABA, Santa Fe, Argentina}
    \email{edalmasso@santafe-conicet.gov.ar, pquijano@santafe-conicet.gov.ar}

    \author[P. Quijano]{Pablo Quijano}
    
    \date{\today}
    \subjclass{42B15, 42B20, 42B25, 42B35}

    \keywords{Variation operators, Hardy operators, fractional Laplacian, weights}

    \begin{abstract}
    We represent by $\{W_{\lambda, t}^\alpha\}_{t>0}$ the semigroup generated by $-\mathbb L^{\alpha}_\lambda$, where $\mathbb L^{\alpha}_\lambda$ is a Hardy operator on a half space. The operator $\mathbb L^{\alpha}_\lambda$ includes a fractional Laplacian and it is defined by 
    \begin{equation*}\mathbb L^{\alpha}_\lambda=(-\Delta)^{\alpha/2}_{\hs}+\lambda x_d^{-\alpha}, \quad \alpha\in (0,2], \lambda\geq 0.\end{equation*}
    We prove that, for every $k\in \mathbb N$, the $\rho$-variation operator $\mathcal{V}_\rho\left(\left\{t^k\partial_t^k W_{\lambda,t}^\alpha\right\}\right)$ is bounded on $L^p(\hs, w)$ for each $1<p<\infty$ and $w\in A_p(\hs)$, being $A_p(\hs)$ the Muckenhoupt $p$-class of weights on $\hs$.
    \end{abstract}
    \maketitle

\section{Introduction}

In this paper we consider the non-local Hardy type operator $\mathbb L^{\alpha}_\lambda$ defined, in a formal way, by
\begin{equation*}\mathbb L^{\alpha}_\lambda:=(-\Delta)^{\alpha/2}_{\hs}+\lambda x_d^{-\alpha}, \end{equation*}
on $\hs:=\{x=(x_1, \dots, x_d)\in \mathbb R^d: x_d>0\}$. Here, $\alpha\in (0,2]$ and $\lambda\geq \lambda_*$ where
\begin{equation*}\lambda_*=-\frac{\Gamma(\frac{\alpha+1}{2})}{\pi} \left(\Gamma\left(\frac{\alpha+1}{2}\right)-\frac{2^{\alpha-1}\sqrt{\pi}}{\Gamma\left(1-\frac\alpha 2\right)}\right).\end{equation*}

In order to give a precise definition of the operator $\mathbb L^{\alpha}_\lambda$ we consider the following quadratic forms. We define, for every $u,v\in C^1_c(\hs)$ (the space of continuously differentiable functions having compact support on $\hs$), when $\alpha\in (0,2)$,
\begin{equation*}Q_\lambda^\alpha(u,v)=\frac12 \mathcal A(d,\alpha)\int_{\hs\times \hs} \frac{(u(x)-u(y))\overline{(v(x)-v(y))}}{|x-y|^{d+\alpha}}\, dxdy+\lambda \int_{\hs} \frac{u(x)\overline{v(x)}}{x_d^\alpha}\, dx,\end{equation*}
where $\mathcal A(d,\alpha):=\frac{-\alpha}{2^{1+\alpha}\pi^{d/2}}\frac{\Gamma((d-\alpha)/2)}{\Gamma(1+\frac\alpha 2)}$, and, when $\alpha=2$,
\begin{equation*}Q_\lambda^2(u,v)=\int_{\hs} \nabla u(x)\overline{\nabla v(x)}\, dx+\lambda \int_{\hs} \frac{u(x)\overline{v(x)}}{x_d^2}\, dx.\end{equation*}
By using the classical Hardy inequality when $\alpha=2$ and those proved by Bogdan and Dyda (\cite[Theorem~1]{BD}), we have that for every $\alpha\in (0,2]$,
\begin{equation*}Q_\lambda^\alpha(u,u)\geq 0, \quad u\in C^1_c(\hs),\end{equation*}
provided that $\lambda\geq\lambda_*$. Note that Hardy inequalities proved in \cite[Theorem~1]{BD} are sharp.

According to Friedrichs's Extension Theorem, the quadratic form $Q_\lambda^\alpha$ defines a self-adjoint and non-negative operator $L_\lambda^{\alpha}$ in $L^2(\hs)$ for which $C^1_c(\hs)$ is a form core. When $f$ is smooth enough in $\hs$, $\mathbb L_\lambda^{\alpha} f=L_\lambda^{\alpha} f$. The operator $-L_\lambda^{\alpha}$ generates in $L^2(\hs)$ a contractive analytic $C_0$-semigroup of angle $\frac\pi 2$ (\cite[Corollary~9]{Sch}). We denote by $\{W_{\lambda,z}^{\alpha}\}_{\Real z>0}$ the semigroup generated by $-L_\lambda^{\alpha}$ in $L^2(\hs)$. From \cite[Theorem~3.1]{AB}, for every $z\in \mathbb C$ with $\Real z>0$, $W_{\lambda,z}^{\alpha}$ is an integral operator whose kernel, denoted by $W^{\alpha}_{\lambda,z}(x,y)$, is analytic in $\{z\in \mathbb C: \Real z>0\}$ for every $x,y\in \hs$. In Section~\ref{sec: heat kernels estimates} we establish some estimates involving the kernel $W^{\alpha}_{\lambda,z}$ that will be useful to prove the main results.

In \cite{CKS10} and \cite{CK03} we can find how the operators $L_0^{\alpha}$ appear related to censored processes and the two-sided estimates for the heat kernels associated with the semigroup generated by $-L_0^{\alpha}$ (see \cite[Theorem~1.1]{CK03} and \cite[Theorem~1.1]{CKS10}).

We will now define the variation operator. Let $\rho>0$ and suppose $\{a_t\}_{t>0}$ is a set of complex numbers. We define the $\rho$-variation $\mathcal V\left(\{a_t\}_{t>0}\right)$ as follows
\begin{equation*}\mathcal V_\rho\left(\{a_t\}_{t>0}\right)=\sup_{0<t_0<\dots<t_n,\ n\in \mathbb N} \left(\sum_{j=0}^{n-1}\left|a_{t_{j+1}}-a_{t_j}\right|^\rho\right)^{1/\rho}.\end{equation*}

Assume that, for some $p\in (1,\infty)$, $T_t$ is a bounded operator on $L^p(\Omega,\mu)$ for every $t>0$, where $(\Omega,\mu)$ is a measure space. The variation operator $\mathcal V_\rho\left(\{T_t\}_{t>0}\right)$ is defined by
\begin{equation*}\mathcal V_\rho\left(\{T_t\}_{t>0}\right)(f)(x):=\mathcal V\left(\{T_t(f)(x)\}_{t>0}\right), \quad f\in L^p(\Omega,\mu).\end{equation*}

In order to have measurability of the function $\mathcal V_\rho\left(\{T_t\}_{t>0}\right)(f)$ we need to have some continuity property in the time variable for $T_t(f)$ (see the comments after \cite[Theorem~1.2]{CJRW1}). Boundedness properties of $\rho$-variation operator (also oscillation operators and $\lambda$-jump counting function) give us quantitative measures for pointwise convergence of the family $\{T_t(f)\}_{t>0}$.

L\'{e}pingle (\cite{Lep}) studied $\rho$-variations of martingales. He established strong $L^p$-inequalities and weak-type $(1,1)$ estimates for the $\rho$-variation operator when $\rho>2$. In \cite{JW} an example with $\rho=2$ shows that $\rho>2$ is the best exponent in the above estimate (see also \cite{Qi}). Variational inequalities for martingales can be seen as an extension of Doob's maximal inequalities.

Bourgain, in a series of papers (see \cite{Bo2, Bo3, Bo}) proved $L^p$-variational inequalities in ergodic settings. Bourgain's ideas were the starting point for using the variation operator in the ergodic theory (\cite{K1, K2, KMT, MS, Wi, ZK}), probability theory (\cite{AJS, MSZ, PX}) and harmonic analysis (\cite{BORSS, BCT, CJRW1, CJRW2, DPDU, DL, GT, K3,  MTX, MT, MiT, OSTTW}).

Our objective is to prove weighted $L^p$-inequalities for the $\rho$-variation operator $\mathcal V_\rho\left(\{t^k\partial_t^k W_{\lambda,t}^{\alpha}\}_{t>0}\right)$ where $k\in \mathbb N$. Our study is motivated by those ones in \cite{ABR} and \cite{N} where the Schr\"odinger operator in $\mathbb R^d$ with inverse square potentials is considered. As it was mentioned, $\{W_{\lambda,t}^{\alpha}\}_{t>0}$ can be seen as the semigroup generated by the Hardy operator type in a half space $\mathbb L_\lambda^{\alpha}$. Frank and Merz (\cite{FM23JFA}) recently have proved that the scales of homogeneous Sobolev spaces generated by $\mathbb L_\lambda^{\alpha}$ and by $(-\Delta)_{\hs}^\alpha$ are comparable.

Our first result is the following.
\begin{thm}\label{thm: unweighted case}
    Let $0<\alpha\leq 2$, $\rho>2$, $\lambda\geq 0$ and $1<p<\infty$. Then,  the $\rho$-variation operator $\mathcal V_\rho\left(\{t^k\partial_t^k W_{\lambda,t}^{\alpha}\}_{t>0}\right)$ is bounded on $L^p(\hs)$ for each $k\in \mathbb N$.
\end{thm}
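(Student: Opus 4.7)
The plan is to view $\mathcal V_\rho(\{t^k\partial_t^k W_{\lambda,t}^{\alpha}\})$ as a Banach-space valued singular integral with values in the space $E_\rho$ of complex functions on $(0,\infty)$ of finite $\rho$-variation, and then to invoke vector-valued Calder\'on--Zygmund theory. Writing $T_k f(x):=\{t^k\partial_t^k W_{\lambda,t}^{\alpha} f(x)\}_{t>0}$, one has $\|T_k f(x)\|_{E_\rho}=\mathcal V_\rho(\{t^k\partial_t^k W_{\lambda,t}^{\alpha}\})(f)(x)$, so the statement reduces to strong type $(p,p)$ boundedness of $T_k$ from $L^p(\hs)$ into the Bochner space $L^p_{E_\rho}(\hs)$ for every $1<p<\infty$.

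For the $L^2$ endpoint I would exploit that $L_\lambda^{\alpha}$ is self-adjoint and non-negative on $L^2(\hs)$, so the spectral theorem identifies $t^k\partial_t^k W_{\lambda,t}^{\alpha}$ with $\phi_k(tL_\lambda^{\alpha})$, where $\phi_k(s)=(-s)^k e^{-s}$ vanishes at both $0$ and $+\infty$. Combining an abstract spectral argument of Le Merdy--Xu type with L\'epingle's variational inequality (in which $\rho>2$ is sharp already for martingales) yields
\begin{equation*}
\left\|\mathcal V_\rho\left(\{t^k\partial_t^k W_{\lambda,t}^{\alpha} f\}_{t>0}\right)\right\|_{L^2(\hs)}\leq C\|f\|_{L^2(\hs)}.
\end{equation*}

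The second ingredient is the $E_\rho$-valued Calder\'on--Zygmund analysis of the kernel $K(x,y):=\{t^k\partial_t^k W_{\lambda,t}^{\alpha}(x,y)\}_{t>0}$. I would use the elementary bound $\|g\|_{E_\rho}\leq \int_0^\infty |g'(t)|\,dt$, valid for every $\rho\geq 1$, to reduce both the size condition $\|K(x,y)\|_{E_\rho}\lesssim |x-y|^{-d}$ and the H\"older smoothness $\|K(x,y)-K(x,y')\|_{E_\rho}\lesssim |y-y'|^{\delta}|x-y|^{-d-\delta}$ for $|y-y'|\leq\tfrac12|x-y|$ to pointwise estimates on $\partial_t^{k+1}W_{\lambda,t}^{\alpha}(x,y)$ and on its first $y$-differences. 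These are precisely the heat-kernel bounds announced in Section~\ref{sec: heat kernels estimates}. With size and smoothness in the $E_\rho$-norm established, standard vector-valued Calder\'on--Zygmund extrapolation upgrades the $L^2$ bound to strong $(p,p)$ for every $1<p<\infty$.

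The main obstacle is the kernel estimates themselves. The Hardy potential $\lambda x_d^{-\alpha}$ destroys the translation invariance in the normal direction and makes $W_{\lambda,t}^{\alpha}$ behave quite differently from the censored stable kernel $W_{0,t}^{\alpha}$ near $\{x_d=0\}$. Producing pointwise bounds on $\partial_t^{k+1}W_{\lambda,t}^{\alpha}(x,y)$ with the correct polynomial decay in $|x-y|$, uniformly in $\lambda\geq 0$, seems to require a Duhamel-type perturbation argument based on the two-sided Chen--Kim--Song estimates for $W_{0,t}^{\alpha}$, followed by a parabolic scaling that absorbs the time derivatives. Carrying these estimates cleanly through the $E_\rho$-norm is the technical heart of the proof.
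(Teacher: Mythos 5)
Your plan (view $f\mapsto\{t^k\partial_t^kW^\alpha_{\lambda,t}f\}$ as an $E_\rho$-valued Calder\'on--Zygmund operator, prove an $L^2$ bound by spectral theory plus Le Merdy--Xu, and then verify $E_\rho$-valued size and H\"older conditions on the kernel) is a genuinely different route from the paper's, and it has a concrete gap in the smoothness step. For the size bound you only need time-derivative estimates on $W^\alpha_{\lambda,t}(x,y)$, and those are indeed the content of Section~\ref{sec: heat kernels estimates}. But the H\"older (or H\"ormander) condition
\[
\|K(x,y)-K(x,y')\|_{E_\rho}\lesssim \frac{|y-y'|^\delta}{|x-y|^{d+\delta}}, \quad |y-y'|\le \tfrac12|x-y|,
\]
requires control of first differences of the kernel in the spatial variable. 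The heat kernel carries the factor $\bigl(y_d/(y_d+t^{1/\alpha})\bigr)^{\mathfrak{p}}$, which is only $C^{\min(\mathfrak{p},1)}$ in $y_d$ near $\{y_d=0\}$, and for $\lambda>0$ with $\alpha\le 1$ the exponent $\mathfrak{p}$ can be arbitrarily close to $0$ as $\lambda\to 0^+$. So the best spatial H\"older exponent $\delta$ degenerates with $\lambda$, and a classical vector-valued CZ extrapolation does not go through uniformly; at minimum you would need spatial gradient estimates on $W^\alpha_{\lambda,t}$ and a careful treatment of the boundary, none of which are in your sketch nor in the paper.

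The paper avoids this entirely. It never runs a CZ argument for Theorem~\ref{thm: unweighted case}, and never needs an $L^2$ endpoint. It handles $\lambda=0$ directly by \cite[Corollary~6.1]{LeMX}, because $\{W^\alpha_{0,t}\}$ is a symmetric sub-Markovian contraction semigroup (dominated by the free stable semigroup on $\mathbb R^d$). For $\lambda>0$ it studies the difference $W^\alpha_{\lambda,t}-W^\alpha_{0,t}$: dominating the $\rho$-variation by $\int_0^\infty|\partial_t(\,\cdot\,)|\,dt$ replaces the variation operator by a positive integral operator $T^{\alpha,m}_\lambda$ whose kernel is an absolutely convergent $t$-integral. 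This kernel is split into a global piece (estimated with Proposition~\ref{prop: 2.2} alone) and a local piece (where the Duhamel formula and the scaling identity \eqref{eq: P1} concentrate the cancellation), and each piece is shown $L^p$-bounded via a Schur test with power weights in $x_d/y_d$ in the spirit of \cite[Proposition~19]{FM23JFA}. That argument requires only size estimates on the kernel, so the boundary regularity issue never arises. If you want to pursue a vector-valued approach, the substitute you would need for the pointwise H\"older condition is something like the $L^p$--$L^q$ off-diagonal estimates of Section~5 combined with the Bernicot--Bui machinery of \cite{BB}, which is what the paper uses for the weighted result but not for Theorem~\ref{thm: unweighted case} itself.
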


For the proof of this theorem, we shall start with the case $\lambda=0$. We denote by $\{\mathbb W_t^{\alpha}\}_{t>0}$ the semigroup of operators generated by $-(-\Delta)^\alpha$ on $\mathbb R^d$. We have that
\begin{equation*}0<W_{0,t}^{\alpha}(x,y)=W_{0,t}^{\alpha}(y,x)\leq \mathbb W_t^{\alpha}(x,y), \quad x,y\in \hs, t>0.\end{equation*}
Then,
\begin{equation*}\int_{\hs} W_{0,t}^{\alpha}(x,y)\, dy=\int_{\hs} W_{0,t}^{\alpha}(x,y)\, dx\leq \int_{\mathbb R^d} \mathbb W_t^{\alpha}(x,y)\, dy=1, \quad x\in \hs, t>0.\end{equation*}
We deduce that, for every $t>0$, $W_{0,t}^{\alpha}$ is a contraction in $L^1(\hs)$ and in $L^\infty(\hs)$. According to \cite[Corollary~6.1]{LeMX} the $\rho$-variation operator $\mathcal V_\rho\left(\{t^k\partial_t^k W_{0,t}^{\alpha}\}_{t>0}\right)$ is bounded on $L^p(\hs)$ for every $1<p<\infty$ and $k\in \mathbb N$.

In a second step, we prove that the $\rho$-variation operator \begin{equation*}\mathcal V_\rho\left(\left\{t^k\partial_t^k \left(W_{\lambda,t}^{\alpha}-W_{0,t}^{\alpha}\right)\right\}_{t>0}\right)\end{equation*}
is bounded on $L^p(\hs)$ for every $1<p<\infty$ and $k\in \mathbb N$. In order to do so, we will consider operators that control this $\rho$-variation operator.

Suppose that $F:(0,\infty)\to \mathbb C$ is a differentiable function on $(0,\infty)$. For $n\in \mathbb N$, let $0<t_0<\dots<t_n$. We have that
\begin{equation*}
\left(\sum_{j=0}^{n-1}\left|F(t_{j+1})-F(t_j)\right|^\rho\right)^{1/\rho}\leq \sum_{j=0}^{n-1}\left|\int_{t_j}^{t_{j+1}}F'(s)\, ds\right|\leq \int_0^\infty |F'(s)|\, ds.
\end{equation*}
Thus,
\begin{equation*}
    \mathcal V_\rho\left(\{F(t)\}_{t>0}\right)\leq \int_0^\infty |F'(s)|\, ds.
\end{equation*}
From this inequality we deduce that 
\begin{equation}\label{eq: 1.1}
    \mathcal V_\rho\left(\left\{t^k\partial_t^k \left(W_{\lambda,t}^{\alpha}-W_{0,t}^{\alpha}\right)\right\}_{t>0}\right)(f)(x)\leq \int_{\hs} f(y) K_\lambda^{\alpha}(x,y)\, dy, \quad x\in \hs,
\end{equation}
where
\begin{equation*}
K_\lambda^{\alpha}(x,y):=\int_0^\infty \left|\partial_t\left(t^k \partial_t^k \left(W_{\lambda,t}^{\alpha}(x,y)-W_{0,t}^{\alpha}(x,y)\right)\right)\right|\, dt, \quad x,y\in \hs.
\end{equation*}
For $m\in \mathbb N$, we consider the operator $T_\lambda^{\alpha, m}$ defined by
\begin{equation*}
T_\lambda^{\alpha, m}(f)(x)=\int_{\hs} T_\lambda^{\alpha, m}(x,y) f(y)\, dy, \quad x\in \hs,
\end{equation*}
with
\begin{equation*}
T_\lambda^{\alpha, m}(x,y)=\int_0^\infty \left|t^m \partial_t^{m+1} \left(W_{\lambda,t}^{\alpha}(x,y)-W_{0,t}^{\alpha}(x,y)\right)\right|\, dt, \quad x,y\in \hs.
\end{equation*}
Taking into account \eqref{eq: 1.1}, to see that $\mathcal V_\rho\left(\left\{t^k\partial_t^k \left(W_{\lambda,t}^{\alpha}-W_{0,t}^{\alpha}\right)\right\}_{t>0}\right)$ is bounded on $L^p(\hs)$ it will be sufficient to show that the operator $T_\lambda^{\alpha, m}$ has this property for $m=0$ when $k=0$, and for $m=k-1$ and $m=k$ when $k\geq 1$.

We will split the operator $T_\lambda^{\alpha, m}$ into two parts, determined by the sets
\begin{align}\label{eq: G}
    \nonumber G=&G_1\cup G_2\\
    \nonumber :=&\{(x,y,t)\in \hs\times\hs\times(0,\infty): x_d \vee y_d\leq t^{1/\alpha}\}\\ 
    &\cup \{(x,y,t)\in \hs\times\hs\times(0,\infty): x_d \vee y_d> t^{1/\alpha},\ 2|x-y|\geq x_d\wedge y_d\}
\end{align}
and
\begin{equation}\label{eq: L}
    L:=\left(\hs\times\hs\times(0,\infty)\right)\setminus G.
\end{equation}
Here, the letter $G$ stands for \textit{global} and the letter $L$ means \textit{local}. 

Hence, for $m\in \mathbb N$, we decompose $T_\lambda^{\alpha, m}$ as follows
\begin{equation*}
T_\lambda^{\alpha, m}=T_{\lambda, \loc}^{\alpha, m}+T_{\lambda, \glob}^{\alpha, m}
\end{equation*}
where
\begin{equation*}
T_{\lambda, \loc/\glob}^{\alpha, m}(f)(x)=\int_{\hs} T_{\lambda, \loc/\glob}^{\alpha, m}(x,y)f(y)\, dy, \quad x\in \hs,
\end{equation*}
being
\begin{equation*}
T_{\lambda, \loc}^{\alpha, m}(x,y)=\int_0^\infty \left|t^m \partial_t^{m+1} \left(W_{\lambda,t}^{\alpha}(x,y)-W_{0,t}^{\alpha}(x,y)\right)\right|\chi_L(x,y,t)\, dt, \quad x,y\in \hs,
\end{equation*}
and
\begin{equation*}
T_{\lambda, \glob}^{\alpha, m}(x,y)=T_\lambda^{\alpha, m}(x,y)-T_{\lambda, \loc}^{\alpha, m}(x,y), \quad x,y\in \hs.
\end{equation*}
Actually, the cancellation in $W_{\lambda,t}^{\alpha}(x,y)-W_{0,t}^{\alpha}(x,y)$ is only relevant for $T_{\lambda, \loc}^{\alpha, m}$; in the global operator this cancellation does not play any role.

In Section~\ref{sec: proof-teo1.1-alfa<2} we will prove the $L^p$-boundedness of the operators $T_{\lambda, \loc}^{\alpha, m}$ and $T_{\lambda, \glob}^{\alpha, m}$ for $\alpha\in (0,2)$, and we shall deal with the case $\alpha=2$ in Section~\ref{sec: proof-teo1.1-alfa=2}. We distinguish this two cases since the estimations for the heat kernel $W^{\alpha}_{\lambda,t}(x,y)$, $x,y\in \hs$ and~$t>0$, are different (see Propositions~\ref{prop: 2.1} and~\ref{prop: 2.2}).

We now establish the weighted $L^p$-inequalities for $\mathcal V_\rho\left(\{t^k\partial_t^k W_{\lambda,t}^{\alpha}\}_{t>0}\right)$. First of all, we recall the definitions of the Muckenhoupt and Reverse H\"older classes in $\hs$. A weight in $\hs$ is a non-negative measurable function defined on $\hs$. A weight $w$ is said to be in $A_p(\hs)$ with $1<p<\infty$ when
\begin{equation*}\sup_B \left(\frac{1}{|B|}\int_B w(x)\, dx\right)\left(\frac{1}{|B|}\int_B w^{-1/(p-1)}(x)\, dx\right)^{p-1}<\infty,\end{equation*}
where the supremum is taken over all balls $B$ in $\hs$. 

If $1<q<\infty$, we say that a weight $w$ belongs to the class $\RH_q(\hs)$ if there exists $C>0$ such that
\begin{equation*}\left(\frac{1}{|B|}\int_B w^q(x)\, dx\right)^{1/q}\leq C \frac{1}{|B|}\int_B w(x)\, dx\end{equation*}
for every ball $B$ in $\hs$. 

The $p$-Lebesgue space with weight $w$ is denoted by $L^p(\hs,w)$.
\begin{thm}\label{thm: weighted case}
    Let $0<\alpha\leq 2$, $\rho>2$, $\lambda\geq 0$, $k\in \mathbb N$ and $1<p<\infty$. Then, the $\rho$-variation operator $\mathcal V_\rho\left(\{t^k\partial_t^k W_{\lambda,t}^{\alpha}\}_{t>0}\right)$ is bounded on $L^p(\hs,w)$ provided that $w\in A_p(\hs)$.
\end{thm}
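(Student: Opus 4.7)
The plan is to refine the splitting used in the proof of Theorem~\ref{thm: unweighted case} so that each piece admits an $A_p$-weighted bound. Writing
\[
\mathcal V_\rho\!\left(\{t^k\partial_t^k W_{\lambda,t}^{\alpha}\}_{t>0}\right)\le \mathcal V_\rho\!\left(\{t^k\partial_t^k W_{0,t}^{\alpha}\}_{t>0}\right)+\mathcal V_\rho\!\left(\{t^k\partial_t^k (W_{\lambda,t}^{\alpha}-W_{0,t}^{\alpha})\}_{t>0}\right),
\]
it suffices to establish weighted $L^p$-boundedness of each summand.

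For the term with $\lambda=0$, the inequality $0\le W_{0,t}^{\alpha}(x,y)\le \mathbb W_t^{\alpha}(x,y)$ transfers size and regularity estimates from the full-space stable (or Gaussian, if $\alpha=2$) kernel $\mathbb W_t^{\alpha}$ to $W_{0,t}^{\alpha}$. I would view the family $\{t^k\partial_t^k W_{0,t}^{\alpha}\}_{t>0}$ as a singular integral with values in the Banach space $E_\rho$ of sequences having bounded $\rho$-variation, and verify the corresponding $E_\rho$-valued H\"ormander condition in both the $x$ and the $y$ variable. Since the unweighted $L^p$-boundedness of this piece is already available from the proof of Theorem~\ref{thm: unweighted case}, vector-valued Calder\'on--Zygmund theory then yields $L^p(\hs,w)$-inequalities for every $w\in A_p(\hs)$.

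For the difference term, \eqref{eq: 1.1} dominates it pointwise by the positive integral operator $T_\lambda^{\alpha,m}$ with $m=0$ if $k=0$ and $m\in\{k-1,k\}$ if $k\ge 1$. I would decompose $T_\lambda^{\alpha,m}=T_{\lambda,\loc}^{\alpha,m}+T_{\lambda,\glob}^{\alpha,m}$ as in \eqref{eq: G}--\eqref{eq: L} and treat the two summands separately. For $T_{\lambda,\loc}^{\alpha,m}$, the cancellation between $W_{\lambda,t}^{\alpha}$ and $W_{0,t}^{\alpha}$ combined with the kernel estimates from Section~\ref{sec: heat kernels estimates} yields Calder\'on--Zygmund type size and smoothness bounds in both arguments; coupling these with the unweighted boundedness provided by Theorem~\ref{thm: unweighted case} allows the classical $A_p$ theorem for singular integrals to apply. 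For $T_{\lambda,\glob}^{\alpha,m}$, where cancellation plays no role, I would estimate the kernel $T_{\lambda,\glob}^{\alpha,m}(x,y)$ by sums of positive kernels consisting of explicit products of powers of $|x-y|$, $x_d$ and $y_d$, and then control each resulting integral operator on $L^p(\hs,w)$ through H\"older's inequality combined with the reverse H\"older self-improving property of $A_p$ weights.

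The hard part is the global operator $T_{\lambda,\glob}^{\alpha,m}$: since its kernel only decays polynomially in the boundary distances $x_d,y_d$ and no cancellation can be exploited, the heat-kernel bounds must be used very sharply. Ensuring that the exponents produced by these estimates land in a range compatible with some $A_p(\hs)\cap \RH_q(\hs)$ class for an arbitrary $w\in A_p(\hs)$, uniformly in $\lambda$ and in both the regimes $\alpha\in(0,2)$ and $\alpha=2$ (whose kernel estimates differ), is the most delicate aspect of the argument.
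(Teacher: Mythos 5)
Your strategy diverges substantially from the paper's, and it has a concrete gap at the global part. The paper does not attempt to upgrade the local/global decomposition of Section~\ref{sec: proof-teo1.1-alfa<2} to the weighted setting at all. Instead, it treats the full operator $\mathcal V_\rho(\{t^k\partial_t^k W^\alpha_{\lambda,t}\})$ directly and applies the off-diagonal machinery of~\cite[Proposition~2.3]{BB} (equivalently \cite[Theorem~6.6]{BZ}): one sets $A^\alpha_{\lambda,t}=(I-W^\alpha_{\lambda,t})^m$ and proves two families of $L^p\to L^q$ annular decay estimates, one for $\mathcal T_\ell\circ(I-W^\alpha_{\lambda,r_B^\alpha})^m$ on dyadic annuli around a ball and one for $(I-A^\alpha_{\lambda,r_B^\alpha})$, summable against $2^{j\alpha}$. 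These are fed by Proposition~\ref{prop: 2.2} alone, together with the already-established unweighted Theorem~\ref{thm: unweighted case}, and deliver $L^r(\hs,w)$ boundedness for $w\in A_{r/p}(\hs)\cap\RH_{(q/r)'}(\hs)$, from which the full $A_p(\hs)$ range is recovered by the self-improvement of reverse H\"older exponents. The point of this approach is precisely that it never confronts the global kernel as a singular integral.

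The gap in your proposal is the claim that the global operator $T^{\alpha,m}_{\lambda,\glob}$ can be handled by ``H\"older's inequality combined with the reverse H\"older self-improving property of $A_p$ weights.'' The global kernel, after the reductions in Section~\ref{sec: proof-teo1.1-alfa<2}, is dominated by
\begin{equation*}
\frac{1}{(|x-y|\vee x_d\vee y_d)^d}+\chi_{\{|x-y|\ge (x_d\wedge y_d)/2\}}\,\frac{(x_d\vee y_d)^{\alpha'}}{|x-y|^{d+\alpha'}},
\end{equation*}
which is a Hardy-type kernel in the last coordinate, \emph{not} a Calder\'on--Zygmund kernel; the paper's unweighted $L^p$ bound for it comes from Schur's test against the power weights $(x_d/y_d)^\beta$, and that test says nothing about $L^p(\hs,w)$ for a general $w\in A_p(\hs)$. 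A positive integral operator of this type is not automatically bounded on all $A_p$-weighted spaces, and you give no mechanism that would replace the Schur argument. A similar (smaller) issue arises for the $\lambda=0$ term: deducing a vector-valued H\"ormander condition for $t^k\partial_t^k W^\alpha_{0,t}(x,y)$ requires control of spatial differences of the \emph{half-space} heat kernel including the boundary factors $(1\wedge x_d/t^{1/\alpha})^\gamma$, which is not supplied by the comparison $W^\alpha_{0,t}\le \mathbb W^\alpha_t$ and is not established in the paper. Both difficulties are exactly what the Bernicot--Zhao/Bui--Bui approximation argument is designed to bypass, so if you want a proof along your lines you would need to actually prove weighted bounds for the global Hardy-type operator (and the kernel regularity for the $\lambda=0$ piece) rather than appeal to general principles.
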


Throughout this paper $c$ and $C$ will always represent positive constants that may vary on each occurrence. The expression $a\lesssim b$ will indicate that $a\leq C b$ for some positive constant $C$, whilst $a\sim b$ means that both $a\lesssim b$ and $b\lesssim a$ hold.

\section{Heat kernel estimates}\label{sec: heat kernels estimates}

Our goal in this section is to establish estimates for the time derivatives of the heat kernel $W^{\alpha}_{\lambda,t}$ that will be useful in the sequel.

We define the exponent $\mathfrak{p}$ as in~\cite{FM23JFA}. We consider
\begin{equation*}
M_\alpha = \begin{cases}
    \alpha & \text{if } \alpha \in  (0,2), \\
    \infty & \text{if } \alpha = 2,
\end{cases}
\end{equation*}
and, for every $\alpha \in (0,2]$, we define
\begin{align*}
  \Omega_\alpha: (-1,M_\alpha) & \longrightarrow \mathbb{R} \\
  s & \longmapsto \Omega_\alpha(s) = \frac{1}{\pi}\left(
 \Gamma(\alpha)\sin \frac{\pi \alpha}{2} + \Gamma(1+s) \Gamma(\alpha-s) \sin \frac{\pi (2s-\alpha)}{2}
  \right).
\end{align*}

We understand $\Omega_2(s) = s(s-1)$, $s\in (-1,\infty)$, and ${\Omega_1(s) = \frac{1}{\pi} (1-\pi s \cot (\pi s))}$, $s\in (-1,1)$. The main properties of the function $\Omega_\alpha$ can be found in~\cite[Appendix A]{FM23JFA}. For every $\lambda \geq \lambda_*$ there exists a unique $\mathfrak{p}\in \left[ \frac{\alpha-1}{2}, M_\alpha \right)$ such that $\Omega_\alpha(\mathfrak{p}) = \lambda$. Note that $\mathfrak{p}$ depends on $\lambda$ and $\alpha$ but we write $\mathfrak{p}$ instead of $\mathfrak{p}(\lambda,\alpha)$. We have that
\begin{enumerate}
    \item $\mathfrak{p} = \max\{\alpha-1, 0\}:=(\alpha-1)_+$, when $\lambda = 0$;
    \item $\mathfrak{p} > (\alpha-1)_+$, when $\lambda > 0$;
    \item $\mathfrak{p} = \frac{1}{2} \left(1+ \sqrt{1+4\lambda}\right)$, when $\alpha = 2$.
\end{enumerate}
For simplicity, we shall denote with $\gamma=(\alpha-1)_+$ for any $\alpha\in (0,2]$.

The following pointwise bounds for the heat kernel $W^{\alpha}_{\lambda,t}$, $t>0$, were established in~\cite[Theorem~9 and Theorem~10]{FM23JFA}.
\begin{prop}\label{prop: 2.1}\leavevmode
    \begin{enumerate}
        \item\label{item: prop 2.1 a} Let $\alpha \in (0,2)$ and $\lambda \geq 0$. Then, for every $x$, $y\in \hs$ and $t>0$, 
        \begin{equation*}
            W^{\alpha}_{\lambda,t}(x,y) \sim \left( 1 \wedge \frac{x_d}{t^{1/\alpha}}\right)^{\mathfrak{p}}
            \left( 1 \wedge \frac{y_d}{t^{1/\alpha}}\right)^{\mathfrak{p}}
            t^{-d/\alpha} \left( 1 \wedge \frac{t^{1/\alpha}}{|x-y|}\right)^{d+\alpha}.
        \end{equation*}
        \item Let $\lambda \geq -1/4$. Then, for every $x$, $y\in \hs$ and $t>0$, 
        \begin{equation*}
            W^{2}_{\lambda,t}(x,y) \asymp
             \left( 1 \wedge \frac{x_d}{t^{1/2}}\right)^{\mathfrak{p}}
            \left( 1 \wedge \frac{y_d}{t^{1/2}}\right)^{\mathfrak{p}}
            t^{-d/2} e^{-\frac{c|x-y|^2}{t}}.
        \end{equation*}
    \end{enumerate}
    Here the symbol $\asymp$ means the same as $\sim$ but admitting different values of $c$ in the exponential function in the upper and lower bounds. 
\end{prop}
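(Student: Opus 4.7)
The plan is to reduce both claims to time $t=1$ by scaling and then to exploit the explicit homogeneous $\mathbb L^\alpha_\lambda$-harmonic function $h(x)=x_d^{\mathfrak p}$ as a ground-state-type weight. Since $\mathbb L^\alpha_\lambda$ is $\alpha$-homogeneous under the dilations $\delta_\mu(x)=\mu x$ of $\hs$, the semigroup satisfies
\begin{equation*}
W^\alpha_{\lambda,t}(x,y) = t^{-d/\alpha}\, W^\alpha_{\lambda,1}\!\left(t^{-1/\alpha}x,\,t^{-1/\alpha}y\right),
\end{equation*}
so the universal quantities $x_d/t^{1/\alpha}$, $y_d/t^{1/\alpha}$, $|x-y|/t^{1/\alpha}$ collapse to $x_d$, $y_d$, $|x-y|$, and it suffices to verify the asserted two-sided bounds at $t=1$.

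The defining relation $\Omega_\alpha(\mathfrak p)=\lambda$ is exactly equivalent to $\mathbb L^\alpha_\lambda h=0$ on $\hs$, which is a direct computation on the homogeneous function $h$ (via the Mellin representation of $(-\Delta)^{\alpha/2}_{\hs}$ acting on $x_d^s$ when $\alpha<2$, and by elementary differentiation when $\alpha=2$). Consequently the Doob transform
\begin{equation*}
\widetilde p_t(x,y) := \frac{W^\alpha_{\lambda,t}(x,y)}{h(x)\,h(y)}
\end{equation*}
is a sub-stochastic Markov transition density with respect to the measure $h(y)^2\,dy$ and no longer feels the singular potential $\lambda x_d^{-\alpha}$. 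Proposition~\ref{prop: 2.1} then becomes the task of proving two-sided bounds on $\widetilde p_1$ of the form $(1\vee x_d)^{-\mathfrak p}(1\vee y_d)^{-\mathfrak p}\Psi_\alpha(x,y)$, where $\Psi_\alpha$ is the polynomial tail $(1\wedge |x-y|^{-1})^{d+\alpha}$ when $\alpha<2$ and the Gaussian tail $e^{-c|x-y|^2}$ when $\alpha=2$.

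For $\alpha\in(0,2)$ the upper bound comes from two ingredients: the polynomial tail is inherited from the domination $W^\alpha_{0,t}\leq \mathbb W^\alpha_t$ by the whole-space heat kernel, combined with a stability argument for the $h$-transformed process; the boundary decay $(1\wedge x_d)^{\mathfrak p}$ comes from a Feynman--Kac / Duhamel iteration on the critical potential, controlled by moments of $h$ and by the sub-Markov property of $\widetilde p_t$. The matching lower bound follows from Chapman--Kolmogorov together with a scale-invariant interior Harnack principle for $\widetilde p_t$, propagated to the boundary through the harmonic majorant $h$. For $\alpha=2$, Davies' perturbation method applied to the $h$-transformed semigroup, combined with the explicit Gaussian profile of $W^{2}_{0,t}$ (Dirichlet Laplacian heat kernel), yields Gaussian upper bounds and Aronson-type lower bounds.

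The chief difficulty I expect is that $\lambda x_d^{-\alpha}$ has exactly the same scaling as $(-\Delta)^{\alpha/2}_{\hs}$, so naive perturbation or subordination with the free (or Dirichlet) semigroup cannot succeed and one must first identify the correct boundary exponent. The existence of the explicit harmonic function $x_d^{\mathfrak p}$, pinned down by the transcendental equation $\Omega_\alpha(\mathfrak p)=\lambda$, is precisely what forces the double boundary factor $(1\wedge x_d/t^{1/\alpha})^{\mathfrak p}(1\wedge y_d/t^{1/\alpha})^{\mathfrak p}$ in the final estimate and makes the ground-state transform the right tool; controlling the bound for $\widetilde p_t$ uniformly all the way down to the boundary is the technical heart of the argument.
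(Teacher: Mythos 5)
The paper does not prove this proposition at all: it is quoted verbatim from Frank and Merz \cite[Theorems~9 and~10]{FM23JFA} and is used throughout as a black box. There is therefore no ``paper's own proof'' for your sketch to be compared against.

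That said, your outline correctly identifies the strategy that Frank and Merz actually employ: the $\alpha$-homogeneity of $\mathbb{L}^\alpha_\lambda$ reduces the estimate to time $t=1$; the exponent $\mathfrak{p}$ is pinned down precisely because $(-\Delta)^{\alpha/2}_{\hs}$ acts diagonally on the powers $x_d^s$ with symbol $\Omega_\alpha(s)$, so $h(x)=x_d^{\mathfrak p}$ is $\mathbb{L}^\alpha_\lambda$-harmonic; and the double boundary factor $(1\wedge x_d/t^{1/\alpha})^{\mathfrak p}(1\wedge y_d/t^{1/\alpha})^{\mathfrak p}$ is indeed what the Doob $h$-transform produces. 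The upper-bound route via the pointwise domination $W^\alpha_{\lambda,t}\le W^\alpha_{0,t}\le \mathbb W^\alpha_t$ plus a Duhamel bootstrap for the boundary factor, and the lower bound via Chapman--Kolmogorov and a Harnack-type interior estimate for the transformed kernel, are the right ingredients; the Davies-Gaffney machinery is also the right tool for $\alpha=2$.

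However, as written this is a programme, not a proof. The steps you name as ``stability argument,'' ``Feynman--Kac / Duhamel iteration controlled by moments of $h$,'' and ``scale-invariant interior Harnack principle propagated to the boundary through $h$'' are precisely the nontrivial technical content, and they are only named, not carried out. In particular: (i) for $\lambda>0$ the lower bound cannot be obtained by any comparison with $W^\alpha_{0,t}$ or $\mathbb W^\alpha_t$ (those comparisons go the wrong way), so the whole burden falls on the Harnack/chain argument, which must hold uniformly in dyadic Whitney cubes up to $\partial\hs$ and be combined with a delicate near-boundary probability estimate to produce the exact power $\mathfrak{p}$; and (ii) the $h$-transformed operator is not the generator of a conservative Markov process when $\mathfrak{p}\notin\{0,(\alpha-1)_+\}$, so ``no longer feels the singular potential'' requires justification. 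None of these are sketched. Since the paper relies on \cite{FM23JFA} precisely to avoid reproducing this long argument, the appropriate comparison is: you have reconstructed the high-level roadmap of the cited reference, but the details that make Proposition~\ref{prop: 2.1} a theorem are entirely deferred.
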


Our next objective is to extend the upper bounds in Proposition~\ref{prop: 2.1} to the complex plane. Note first that from~\ref{item: prop 2.1 a} we deduce that 
\begin{align}\label{eq: 2.0}
            W^{\alpha}_{\lambda,t}(x,y) \lesssim 
            \left( \frac{x_d}{x_d + t^{1/\alpha}}\right)^{\mathfrak{p}}
            \left( \frac{y_d}{y_d + t^{1/\alpha}}\right)^{\mathfrak{p}}
            t^{-d/\alpha} \left( \frac{t^{1/\alpha}}{t^{1/\alpha} + |x-y|}\right)^{d+\alpha},
\end{align}
for every $x$, $y\in \hs$ and $t>0$.

Since the function $\phi(s) = \frac{s}{s+a}$, $s> -a$, is increasing for every $a>0$, it follows that
\begin{equation} \label{eq: 2.1}
    W^{\alpha}_{\lambda,t}(x,y) 
    \lesssim
    \left( \frac{|x|}{|x| + t^{1/\alpha}}\right)^{\mathfrak{p}}
            \left( \frac{|y|}{|y| + t^{1/\alpha}}\right)^{\mathfrak{p}}
            t^{-d/\alpha} \left( \frac{t^{1/\alpha}}{t^{1/\alpha} + |x-y|}\right)^{d+\alpha},
\end{equation}
for every $x$, $y\in \hs$ and $t>0$.

By using \cite[Proposition~2.2(a)]{BM23} we obtain that if $\alpha\in (0,2)$, $\lambda\geq 0$ and $\epsilon\in (0,1)$, for every $z\in \mathbb{C}$, $|\arg(z)|<\epsilon\pi/4$, and $x$, $y\in \hs$,
\begin{equation*}
    W^{\alpha}_{\lambda,z}(x,y) 
    \lesssim
    \left( \frac{|x|}{|x| + |z|^{1/\alpha}}\right)^{\mathfrak{p}}
            \left( \frac{|y|}{|y| + |z|^{1/\alpha}}\right)^{\mathfrak{p}}
            t^{-d/\alpha} \left( \frac{|z|^{1/\alpha}}{|z|^{1/\alpha} + |x-y|}\right)^{(d+\alpha)(1-\epsilon)}.
\end{equation*}

By using the Cauchy integral formula we deduce that, if $\alpha \in (0,2)$, $\lambda\geq 0$ and $k\in \mathbb{N}$,
\begin{equation*}
    |t^k \partial^k_t W^{\alpha}_{\lambda,t}(x,y)|
    \lesssim
    \left( \frac{|x|}{|x| + t^{1/\alpha}}\right)^{\mathfrak{p}}
            \left( \frac{|y|}{|y| + t^{1/\alpha}}\right)^{\mathfrak{p}}
            t^{-d/\alpha} \left( \frac{t^{1/\alpha}}{t^{1/\alpha} + |x-y|}\right)^{(d+\alpha)(1-\epsilon)},
\end{equation*}
for every $x$, $y\in \hs$ and $t>0$.

In a similar way we get that if $\alpha = 2$, $\lambda \geq -1/4$ and $k\in \mathbb{N}$
\begin{equation} \label{eq: 2.2}
|t^k \partial^k_t W^{2}_{\lambda,t}(x,y) |
    \lesssim
    \left( \frac{|x|}{|x| + t^{1/2}}\right)^{\mathfrak{p}}
            \left( \frac{|y|}{|y| + t^{1/2}}\right)^{\mathfrak{p}}
            t^{-d/2}  e^{-\frac{c|x-y|^2}{t}}
\end{equation}
for every $x$, $y\in \hs$ and $t>0$.

However we need to improve~\eqref{eq: 2.1} and~\eqref{eq: 2.2} by replacing $|x|$ and $|y|$ by $x_d$ and $y_d$ respectively. 
\begin{prop}\label{prop: 2.2}\leavevmode
    \begin{enumerate}
        \item\label{item: prop 2.2.a} Let $\alpha\in (0,2)$, $\lambda\geq 0$, $\epsilon\in(0,1)$ and $k\in\mathbb{N}$. For every $x$, $y\in \hs$ and $t>0$
        \begin{equation*}
            |t^k \partial^k_t W^{\alpha}_{\lambda,t}(x,y)| \lesssim 
            \left( \frac{x_d}{x_d + t^{1/\alpha}}\right)^{\mathfrak{p}}
            \left( \frac{y_d}{y_d + t^{1/\alpha}}\right)^{\mathfrak{p}}
            t^{-d/\alpha} \left( \frac{t^{1/\alpha}}{t^{1/\alpha} + |x-y|}\right)^{(d+\alpha)(1-\epsilon)}.
        \end{equation*}
        \item\label{item: prop 2.2.b} Let $\lambda \geq -1/4$ and $k \in \mathbb{N}$. For every $x$, $y\in \hs$ and $t>0$,
        \begin{equation*}
            |t^k \partial^k_t W^{2}_{\lambda,t}(x,y)| \lesssim 
            \left( \frac{x_d}{x_d + t^{1/2}}\right)^{\mathfrak{p}}
            \left( \frac{y_d}{y_d + t^{1/2}}\right)^{\mathfrak{p}}
            t^{-d/2} e^{-\frac{c|x-y|^2}{t}}.
        \end{equation*}
    \end{enumerate}
\end{prop}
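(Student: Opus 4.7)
The plan is to use the semigroup property to represent $W^{\alpha}_{\lambda,t}(x,y)$ as a three--fold integral, placing all the time derivatives on the middle factor while the two outer factors supply the desired boundary decay factors $x_d^{\mathfrak p}$ and $y_d^{\mathfrak p}$. The upper bounds with $|x|,|y|$ displayed just above Proposition~\ref{prop: 2.2} are already at our disposal; the point is to convert $|x|$ and $|y|$ into $x_d$ and $y_d$.

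For any fixed $s_1,s_2>0$ with $s_1+s_2<t$ the semigroup identity gives
\begin{equation*}
W^{\alpha}_{\lambda,t}(x,y)=\int_{\hs}\!\!\int_{\hs} W^{\alpha}_{\lambda,s_1}(x,z_1)\,W^{\alpha}_{\lambda,t-s_1-s_2}(z_1,z_2)\,W^{\alpha}_{\lambda,s_2}(z_2,y)\,dz_1\,dz_2.
\end{equation*}
Since $s_1$ and $s_2$ are independent of $t$, differentiating $k$ times in $t$ only hits the middle factor. The resulting identity is valid for each admissible $s_1,s_2$ separately, so I may specialize to $s_1=s_2=t/3$ at the end to obtain
\begin{equation*}
\partial_t^k W^{\alpha}_{\lambda,t}(x,y)=\int_{\hs}\!\!\int_{\hs} W^{\alpha}_{\lambda,t/3}(x,z_1)\,\bigl[\partial_u^k W^{\alpha}_{\lambda,u}(z_1,z_2)\bigr]_{u=t/3}\,W^{\alpha}_{\lambda,t/3}(z_2,y)\,dz_1\,dz_2.
\end{equation*}

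Next I would estimate the three factors separately. The two outer factors are controlled directly by Proposition~\ref{prop: 2.1}, which supplies the sharp boundary decays $(x_d/(x_d+t^{1/\alpha}))^{\mathfrak p}$ and $(y_d/(y_d+t^{1/\alpha}))^{\mathfrak p}$. The middle factor carries all the $k$ derivatives and is controlled by the already-established bound with $|z_1|,|z_2|$ (the Cauchy--integral consequence of \cite{BM23} stated just above the proposition), in which the spatial decay exponent is $(d+\alpha)(1-\epsilon)$. The interior factors of the form $(z_{i,d}/(z_{i,d}+t^{1/\alpha}))^{\mathfrak p}$ and $(|z_i|/(|z_i|+t^{1/\alpha}))^{\mathfrak p}$ are then simply bounded by $1$, leaving only spatial kernels inside the integral.

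What remains is to evaluate a convolution of three Poisson-type kernels, of the form $\rho^{d+\alpha}/(|z|+\rho)^{d+\alpha}$ (outer factors) and $\rho^{(d+\alpha)(1-\epsilon)}/(|z|+\rho)^{(d+\alpha)(1-\epsilon)}$ (middle factor), where $\rho=(t/3)^{1/\alpha}$. By splitting the $w$-integration according to whether $|w|<|z|/2$ or $|w|\geq|z|/2$, and using that an $L^1$-normalized Poisson-type kernel has $L^1$-norm of order $\rho^{-\alpha}$, one shows that the triple convolution is dominated by the slowest-decaying kernel. Combining with the three prefactors $(t/3)^{-d/\alpha}$ this gives exactly $t^{-d/\alpha}\bigl(t^{1/\alpha}/(t^{1/\alpha}+|x-y|)\bigr)^{(d+\alpha)(1-\epsilon)}$, proving part~\ref{item: prop 2.2.a}. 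For part~\ref{item: prop 2.2.b} the argument is identical, with the Poisson-type kernels replaced by Gaussians: the convolution of three Gaussians is again Gaussian (with a possibly smaller constant $c$).

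The main obstacle is the convolution estimate in the last step: although elementary, one must verify that the exponent $(d+\alpha)(1-\epsilon)$ of the middle kernel is preserved through the convolution and not further degraded by the two outer (faster-decaying) factors. The alternative of putting the derivatives on an outer factor (via a two-fold splitting) would produce $|x|$ or $|y|$ in the final bound and so would not improve upon the already-known estimates; hence the three-fold splitting is essential.
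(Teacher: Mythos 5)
Your proof is correct but takes a genuinely different route from the paper. The paper works in complex time: it first establishes the weighted contractive bound $|w(x_d,z)W^\alpha_{\lambda,z}(x,y)w(y_d,z)|\leq C|z|^{-d/\alpha}$ on a sector, via Stein-type interpolation between weighted $L^1\to L^2$ and $L^2\to wL^\infty$ mapping properties together with $L^2$-contractivity of $W^\alpha_{\lambda,is}$; it then invokes the Duong--Robinson estimate \cite[Proposition~3.3]{DR} to combine this with the real-time spatial decay \eqref{eq: 2.0} into a holomorphic-in-time bound carrying both the $x_d,y_d$ boundary factors and the spatial tails (estimate \eqref{eq: 2.5}), and finally applies the Cauchy integral formula to get the derivative bounds. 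Your argument avoids complex time entirely: the three-fold semigroup factorization (legitimate, since for fixed $s_1,s_2$ the $t$-derivative hits only the middle factor, and the resulting identity may then be evaluated at $s_1=s_2=t/3$ because the left side is $s_1,s_2$-independent) lets you extract the $\mathfrak{p}$-powers of $x_d$ and $y_d$ directly from the sharp undifferentiated real-time bounds of Proposition~\ref{prop: 2.1} applied to the two outer factors, while the differentiated middle factor is controlled by the already-stated $|z_1|,|z_2|$-weighted Cauchy-formula bound with those interior weight factors simply majorized by~$1$; the spatial decay is then recovered by a triple convolution of Poisson-type (resp.\ Gaussian) kernels. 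Both arguments are valid. Yours is more elementary, whereas the paper's holomorphic scheme is closer to standard treatments and immediately yields sectorial versions of the estimate.

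Two small repairs. \textbf{(i)} The remark that ``an $L^1$-normalized Poisson-type kernel has $L^1$-norm of order $\rho^{-\alpha}$'' is garbled: with the correct normalization $P_\rho(w)=\rho^{-d}\bigl(\rho/(\rho+|w|)\bigr)^{\beta}$, $\beta>d$, one has $\|P_\rho\|_{L^1}=O(1)$ uniformly in $\rho$, and it is precisely this uniform $L^1$ bound that closes both halves of the near/far splitting in the convolution estimate. \textbf{(ii)} Your middle kernel is integrable only when $(d+\alpha)(1-\epsilon)>d$, i.e.\ $\epsilon<\alpha/(d+\alpha)$, so the convolution estimate as stated requires that range. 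This is not a real obstruction --- since $t^{1/\alpha}/(t^{1/\alpha}+|x-y|)\leq 1$, the asserted bound is monotone in $\epsilon$ and proving it for small $\epsilon$ already implies it for all $\epsilon\in(0,1)$ --- but you should record this reduction explicitly, because as written the convolution step silently assumes integrability of the slowest-decaying factor.
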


\begin{proof}
    We prove~\ref{item: prop 2.2.a} first.  According to~\eqref{eq: 2.1} and proceeding as in the proof of~\cite[Proposition~3.4]{BuiD} (see also \cite[Proposition~2.2(a)]{BM23}) we deduce that
    \begin{equation}\label{eq: 2.4}
        |w(x_d,z) W^{\alpha}_{\lambda,z}(x,y) w(y_d,z)| \leq \frac{C}{|z|^{d/\alpha}},
    \end{equation}
    for $x$, $y\in\hs$ and $z\in \mathbb{C}$, $|\arg(z)|\leq \pi/4$, where
    \begin{equation*}w(s,z) =\left( \frac{s}{s + z^{1/\alpha}}\right)^{-\mathfrak{p}}, \end{equation*}
    for $s>0$ and $z\in\mathbb{C}$, $\arg(z)\leq \pi/2$. Indeed, since the operator $L^\alpha_\lambda$ is self-adjoint and non-negative, for every $s>0$, the operator $W^\alpha_{\lambda,is}$ is contractive in $L^2(\hs)$. In order to prove our claim it is sufficient to see that
    \begin{equation*}
        \| W^\alpha_{\lambda,t}\|_{L^1(\hs, w^{-1}) \hookrightarrow L^2(\hs)} + 
 \|W^\alpha_{\lambda,t} \|_{L^2(\hs) 
 \hookrightarrow wL^{\infty}(\mathbb{R}^d)} 
 \lesssim t^{-\frac{d}{2\alpha}}, \;\; t>0,
    \end{equation*}
    where $wL^{\infty}(\mathbb{R}^d) = \{f \text{ measurable in } \mathbb{R}^d: \, wf\in L^{\infty}(\mathbb{R}^d)\}$.

    Suppose $f\in L^{2}(\hs)$. According to~\eqref{eq: 2.0} and using~\cite[equation~(18)]{BuiD} we get
    \begin{equation*}
        \begin{split}
            |W^{\alpha}_{\lambda,t} (f)(x)| w(x_d,t) & 
            \lesssim
            \int_{\hs} \left(\frac{y_d}{y_d + t^{1/\alpha}}\right)^{\mathfrak{p}} \frac{t}{t^{1/\alpha}+ |x-y|^{d+\alpha}} |f(y)|dy
            \\ & \lesssim \left(
            \int_{\hs} \left| \left(\frac{|y|}{|y| + t^{1/\alpha}}\right)^{\mathfrak{p}} \frac{t}{t^{1/\alpha}+ |x-y|^{d+\alpha}} \right|^2 dy \right)^{1/2} \|f\|_{L^2(\hs)}
            \\ & \lesssim
            t^{-\frac{d}{2\alpha}} \|f\|_{L^2(\hs)}, 
        \end{split}
    \end{equation*}
for $ x\in \hs$ and  $t>0$.
    
    In a similar way we can see that
    \begin{equation*}
        \|W^{\alpha}_{\lambda,t} f\|_{L^2(\hs)}
        \lesssim  t^{-\frac{d}{2\alpha}} \|f\|_{L^1(\hs,w^{-1})},
    \end{equation*}
    for $f\in L^1(\hs,w^{-1})$ and $t>0$. We conclude that~\eqref{eq: 2.4} holds.

    On the other hand, according to~\eqref{eq: 2.0} it follows that
    \begin{equation*}
        \begin{split}
            |w(x_d,t) W^{\alpha}_{\lambda,t} (x,y) w(y_d,t)|
            & \lesssim t^{-d/\alpha} \left( \frac{t^{1/\alpha}}{t^{1/\alpha} + |x-y|}\right)^{d+\alpha}
            \\ & \lesssim t^{-d/\alpha} 
            \left( 1+ \frac{|x-y|}{t^{1/\alpha}} \right)^{-d-\alpha}
            \\ & \lesssim t^{-d/\alpha} 
            \left( 1+ \left(\frac{|x-y|^{\alpha}}{t}\right)^{1/\alpha} \right)^{-d-\alpha},
        \end{split}
    \end{equation*}
    for $x$, $y\in\mathbb{R}^d$ and $t>0$. 

    Let us define the function $b(x) = \left(1+s^{1/\alpha}\right)^{-d-\alpha}$, $s>0$. It is clear that $b$ is bounded and decreasing in $(0,\infty)$.
    Then, by~\cite[Proposition 3.3]{DR}, for each $\epsilon\in (0,1)$ and $\theta \in (0, \epsilon\pi/2)$ there exists $C>0$ such that
    \begin{equation*}
        |W^{\alpha}_{\lambda,z} (x,y)| \leq C \left|\frac{x_d}{x_d+z}\right|^{\mathfrak{p}}  \left|\frac{y_d}{y_d+z}\right|^{\mathfrak{p}} (\Real(z))^{-d/\alpha} b\left( \frac{|x-y|^{\alpha}}{|z|}\right)^{1-\epsilon},
    \end{equation*}
    for $x$, $y\in(0,\infty)$ and $z\in \mathbb{C}$ such that $|\arg(z)|\leq \theta$.

    If $z\in\mathbb{C}$ and $\arg(z)|\leq \theta$, with $0<\theta<\pi/2$, then 
    \begin{equation*}|a+z| = \sqrt{(a + \Real(z))^2 + (\textup{Im}(z))^2}
     \geq \sqrt{a^2 + |z|^2} \gtrsim a + |z|, \; a>0.\end{equation*}
    We conclude that for each $\epsilon\in (0,1)$ and $\theta \in (0,\epsilon\pi/2)$
    \begin{equation}\label{eq: 2.5}
     |W^{\alpha}_{\lambda,z} (x,y)| \lesssim  \left(\frac{x_d}{x_d+|z|}\right)^{\mathfrak{p}}  \left(\frac{y_d}{y_d+|z|}\right)^{\mathfrak{p}} |z|^{-d/\alpha} \left( \frac{|z|^{1/\alpha}}{|z|^{1/\alpha} + |x-y|}\right)^{(d+\alpha)(1-\epsilon)},
    \end{equation}
    where $z\in \mathbb{C}$ and $|\arg(z)|\leq \theta$. Note that $\Real(z)\simeq |z|$ for $z\in \mathbb{C}$ with $|\arg(z)|\leq \theta$. 

    Estimates like~\eqref{eq: 2.5} can also be proved by using the methods in~\cite[Theorem~2.1]{Me}.

    The Cauchy integral formula allow us to obtain~\ref{item: prop 2.2.a}. In order to prove~\ref{item: prop 2.2.b} we can proceed in a similar way. We can also prove it using Davies method as in the proof of~\cite[Theorem 3.4.8]{Da} and the Cauchy integral formula. 
\end{proof}

\section{Proof of Theorem~\ref{thm: unweighted case} for \mathinhead{\alpha\in (0,2)}{alpha in (0,2)}}\label{sec: proof-teo1.1-alfa<2}

\subsection{Global part}
We define, for $k\in \mathbb N$, $k\geq 1$, 
\begin{equation}\label{eq: def nucleo Jalfa}
    J_\lambda^{\alpha, k} (x,y)=\int_0^\infty \left|t^{k-1}\partial_t^k (W^{\alpha}_{\lambda,t}(x,y)-W^{\alpha}_{0,t}(x,y))\right|\chi_G(x,y,t)\, dt, \quad x,y\in \hs,
\end{equation}
where $G$ is as in \eqref{eq: G}.

By using Proposition~\ref{prop: 2.2}~\ref{item: prop 2.2.a} we get
\begin{align*}
    \left|J_\lambda^{\alpha, k} (x,y)\right|&\leq \int_0^\infty \left(\left|t^{k-1}\partial_t^k W^{\alpha}_{\lambda,t}(x,y)\right|+\left|t^{k-1}\partial_t^k W^{\alpha}_{0,t}(x,y)\right|\right)\chi_G(x,y,t)\, dt\\
    &\lesssim \int_0^\infty \frac{1}{t^{1+d/\alpha}} \left(\frac{x_d}{x_d+t^{1/\alpha}}\right)^{\gamma} \left(\frac{y_d}{y_d+t^{1/\alpha}}\right)^{\gamma}\\
    &\qquad \times\left(\frac{t^{1/\alpha}}{t^{1/\alpha}+|x-y|}\right)^{(d+\alpha)(1-\epsilon)}\chi_G(x,y,t)\, dt, \quad x,y\in \hs,
\end{align*}
where $0<\epsilon<1$ and, as we set before, $\gamma=(\alpha-1)_+$. Here, we can actually use that the power $\mathfrak{p}$ can be replaced by any non-negative number less or equal than $\mathfrak{p}$ in the estimate given in Proposition~\ref{prop: 2.2}~\ref{item: prop 2.2.a}.

We shall split the last integral over $G_1$ and $G_2$, and denote
\begin{align*}
    S_{\epsilon, j}^\alpha(x,y)&=\int_0^\infty \frac{1}{t^{1+d/\alpha}} \left(\frac{x_d}{x_d+t^{1/\alpha}}\right)^{\gamma} \left(\frac{y_d}{y_d+t^{1/\alpha}}\right)^{\gamma}\\
    &\quad \times\left(\frac{t^{1/\alpha}}{t^{1/\alpha}+|x-y|}\right)^{(d+\alpha)(1-\epsilon)}\chi_{G_j}(x,y,t)\, dt, \quad j=1,2.
\end{align*}
From the definition of $G_1$ we can write
\begin{align*}
    S_{\epsilon, 1}^\alpha &(x,y)\\
    &=\int_{(x_d\vee y_d)^\alpha}^\infty  \left(\frac{x_d}{x_d+t^{1/\alpha}}\right)^{\gamma} \left(\frac{y_d}{y_d+t^{1/\alpha}}\right)^{\gamma}\left(\frac{t^{1/\alpha}}{t^{1/\alpha}+|x-y|}\right)^{(d+\alpha)(1-\epsilon)}\, \frac{dt}{t^{1+d/\alpha}}
\end{align*}
If we suppose that $x,y\in \hs$ with $|x-y|\leq x_d\vee y_d$, we get
\begin{equation*}S_{\epsilon, 1}^\alpha (x,y)\lesssim \int_{(x_d\vee y_d)^\alpha}^\infty \frac{(x_d y_d)^{\gamma}}{t^{1+d/\alpha+2q/\alpha}}\, dt\leq C \frac{(x_d y_d)^{\gamma}}{(x_d\vee y_d)^{d+2q}}, \quad 0<\epsilon<1.\end{equation*}
On the other hand, if $|x-y|> x_d\vee y_d$, 
\begin{align*}
    S_{\epsilon, 1}^\alpha &(x,y)\\
    &=\int_{(x_d\vee y_d)^\alpha}^{|x-y|^\alpha} \left(\frac{x_d}{x_d+t^{1/\alpha}}\right)^{\gamma} \left(\frac{y_d}{y_d+t^{1/\alpha}}\right)^{\gamma}\left(\frac{t^{1/\alpha}}{t^{1/\alpha}+|x-y|}\right)^{(d+\alpha)(1-\epsilon)}\, \frac{dt}{t^{1+d/\alpha}}\\
    &\quad +\int_{|x-y|^\alpha}^\infty \left(\frac{x_d}{x_d+t^{1/\alpha}}\right)^{\gamma} \left(\frac{y_d}{y_d+t^{1/\alpha}}\right)^{\gamma}\left(\frac{t^{1/\alpha}}{t^{1/\alpha}+|x-y|}\right)^{(d+\alpha)(1-\epsilon)}\, \frac{dt}{t^{1+d/\alpha}}\\
    &\lesssim \int_{0}^{|x-y|^\alpha} \frac{(x_d y_d)^{\gamma}}{t^{1+d/\alpha+2q/\alpha}} \left(\frac{t^{1/\alpha}}{|x-y|}\right)^{(d+\alpha)(1-\epsilon)}\, dt+\int_{|x-y|^\alpha}^\infty \frac{(x_d y_d)^{\gamma}}{t^{1+d/\alpha+2q/\alpha}}\, dt\\
    &\lesssim  (x_d y_d)^{\gamma}\left(\frac{1}{|x-y|^{(d+\alpha)(1-\epsilon)}}\int_{0}^{|x-y|^\alpha} t^{\frac{(d+\alpha)(1-\epsilon)}{\alpha}-1-\frac{d}{\alpha}-\frac{2q}{\alpha}}\, dt\right.\\
    &\quad +\left. \int_{|x-y|^\alpha}^\infty  \frac{dt}{t^{1+d/\alpha+2q/\alpha}}  \right)\\
    &\lesssim \frac{(x_d y_d)^{\gamma}}{|x-y|^{d+2q}},
\end{align*}
provided that $(d+\alpha)(1-\epsilon)-d-2\gamma>0$, that is, whenever $0<\epsilon<(\alpha-2\gamma)/(d+\alpha)$. Note that this is possible since $\gamma=(\alpha-1)_+$ implies $\alpha-2\gamma>0$. 

We conclude that
\begin{equation*}S_{\epsilon, 1}^\alpha (x,y)\lesssim \frac{(x_d y_d)^{\gamma}}{(|x-y|\vee x_d \vee y_d)^{d+2\gamma}}, \quad x,y\in \hs\end{equation*}
when $0<\epsilon<(\alpha-2\gamma)/(d+\alpha)$.

For $j=2$, we take into account that $|x-y|\geq (x_d\wedge y_d)/2$, so we have
\begin{align*}
    S_{\epsilon, 2}^\alpha (x,y)&\lesssim \int_0^{(x_d\vee y_d)^\alpha} \left(\frac{x_d\wedge y_d}{x_d\wedge y_d+t^{1/\alpha}}\right)^{\gamma} \left(\frac{t^{1/\alpha}}{t^{1/\alpha}+|x-y|}\right)^{(d+\alpha)(1-\epsilon)}\, \frac{dt}{t^{1+d/\alpha}}\\
    &=\int_0^{(x_d\wedge y_d)^\alpha} \left(\frac{x_d\wedge y_d}{x_d\wedge y_d+t^{1/\alpha}}\right)^{\gamma} \left(\frac{t^{1/\alpha}}{t^{1/\alpha}+|x-y|}\right)^{(d+\alpha)(1-\epsilon)}\, \frac{dt}{t^{1+d/\alpha}}\\
    &\quad +\int_{(x_d\wedge y_d)^\alpha}^{(x_d\vee y_d)^\alpha}\left(\frac{x_d\wedge y_d}{x_d\wedge y_d+t^{1/\alpha}}\right)^{\gamma} \left(\frac{t^{1/\alpha}}{t^{1/\alpha}+|x-y|}\right)^{(d+\alpha)(1-\epsilon)}\, \frac{dt}{t^{1+d/\alpha}}\\
    &\lesssim \int_0^{(x_d\wedge y_d)^\alpha} \left(\frac{t^{1/\alpha}}{|x-y|}\right)^{(d+\alpha)(1-\epsilon)} \frac{dt}{t^{1+d/\alpha}}\\
    &\quad +\int_{(x_d\wedge y_d)^\alpha}^{(x_d\vee y_d)^\alpha} \left(\frac{x_d\wedge y_d}{t^{1/\alpha}}\right)^{\gamma} \left(\frac{t^{1/\alpha}}{|x-y|}\right)^{(d+\alpha)(1-\epsilon)} \frac{dt}{t^{1+d/\alpha}}\\
    &\lesssim \int_0^{(x_d\wedge y_d)^\alpha} \frac{t^{(d+\alpha)(1-\epsilon)/\alpha-1-d/\alpha}}{|x-y|^{(d+\alpha)(1-\epsilon)}}\, dt\\
    &\quad +(x_d\wedge y_d)^{\gamma}\int_0^{(x_d\vee y_d)^\alpha}\frac{t^{\frac{(d+\alpha)(1-\epsilon)}{\alpha}-1-\frac{d}{\alpha}-\frac{\gamma}{\alpha}}}{|x-y|^{(d+\alpha)(1-\epsilon)}}\, dt\\
    &\lesssim \frac{(x_d\wedge y_d)^{(d+\alpha)(1-\epsilon)-d}}{|x-y|^{(d+\alpha)(1-\epsilon)}} +\frac{(x_d\wedge y_d)^{\gamma} (x_d\vee y_d)^{(d+\alpha)(1-\epsilon)-d-\gamma}}{|x-y|^{(d+\alpha)(1-\epsilon)}}\\
    &\lesssim \frac{1}{|x-y|^{(d+\alpha)(1-\epsilon)}} (x_d\vee y_d)^{(d+\alpha)(1-\epsilon)-d-\gamma} (x_d\wedge y_d)^{\gamma}
\end{align*}
provided that $(d+\alpha)(1-\epsilon)-d-\gamma>0$, i.e., $0<\epsilon<(\alpha-\gamma)/(d+\alpha)$. Actually, this is possible since the restriction obtained in the case $j=1$ implies this one.

Recall that the above estimates for $S_{\epsilon, j}^\alpha$, $j=1,2$, are valid for any non-negative power less or equal to $\mathfrak{p}=\gamma$; in particular, for power zero. Thus, in this case we get
\begin{align*}
    \left|J_\lambda^{\alpha, k} (x,y)\right|&\lesssim \frac{1}{(|x-y|\vee x_d \vee y_d)^{d}}+\chi_{\left\{|x-y|\geq \frac{x_d\wedge y_d}{2}\right\}}(x,y)\frac{(x_d \vee y_d)^{(d+\alpha)(1-\epsilon)-d}}{|x-y|^{(d+\alpha)(1-\epsilon)}}
\end{align*}
whenever $0<\epsilon<\alpha/(d+\alpha)$.

Fix now $0<\epsilon<\alpha/(d+\alpha)$. We define now $\alpha'=(d+\alpha)(1-\epsilon)-d$. Hence (see \cite[p.~31]{FM23JFA}),
\begin{equation*}\frac{1}{(|x-y|\vee x_d \vee y_d)^{d}}\lesssim \frac{(|x-y|\vee x_d \vee y_d)^{\alpha'}}{(|x-y|\vee (x_d \wedge y_d))^{d+\alpha'}}\end{equation*}
and
\begin{equation*}\frac{(x_d \vee y_d)^{\alpha'}}{|x-y|^{d+\alpha'}}\chi_{\left\{|x-y|\geq \frac{x_d\wedge y_d}{2}\right\}}(x,y)\lesssim \frac{(|x-y|\vee x_d \vee y_d)^{\alpha'}}{(|x-y|\vee (x_d \wedge y_d))^{d+\alpha'}}.\end{equation*}

Therefore, by proceeding as in the proof of \cite[Proposition~19, Step~1]{FM23JFA}, we deduce that
\begin{equation*}\int_{\mathbb R^{d-1}} \left|J_\lambda^{\alpha, k} (x,y)\right|\, dy'\lesssim \frac{(x_d \vee y_d)^{\alpha'}}{(|x_d-y_d|\vee (x_d \wedge y_d))^{\alpha'+1}},\end{equation*}
being $y'=(y_1,\dots, y_{d-1})$.

By considering $w_\beta(x,y)=\left(\frac{x_d}{y_d}\right)^\beta$, $x,y\in \hs$ and $\beta>0$, we get, as in \cite[Proposition~19, Step~2]{FM23JFA}, that for any $1<p<\infty$,
\begin{align*}
    \sup_{x\in \hs} &\int_{\hs} w_\beta(x,y)^{1/p} \left|J_\lambda^{\alpha, k} (x,y)\right|\, dy <\infty, \quad 0<\beta/p<1,\\
    \sup_{x\in \hs} &\int_{\hs} w_\beta(x,y)^{-1/p'} \left|J_\lambda^{\alpha, k} (x,y)\right|\, dx <\infty, \quad 0<\beta/p'<1.
\end{align*}
According to Schur's test with weights (see, for instance, \cite[Lemma~3.3]{KMVZZ}), we can conclude that the integral operator defined by
\begin{equation*}J_\lambda^{\alpha, k} (f)(x)=\int_{\hs} J_\lambda^{\alpha, k} (x,y) f(y)\, dy, \quad x\in \hs,\end{equation*}
is bounded on $L^p(\hs)$ for every $1<p<\infty$.

\subsection{Local part}\label{sec: proof-teo1.1-alfa<2 local}

According to Duhamel formula we can write, for every $x,y\in \hs$ and $t>0$, that
\begin{align}\label{eq: duhamel}
    \nonumber D_{\lambda,t}^{\alpha, 0}(x,y)&:=\lambda \int_0^t \int_{\hs} W^{\alpha}_{0,t-s} (x,z) z_d^{-\alpha} W^{\alpha}_{\lambda,s} (z,y)\, dzds\\
    \nonumber &=\lambda \int_0^{t/2} \int_{\hs} W^{\alpha}_{0,t-s} (x,z) z_d^{-\alpha} W^{\alpha}_{\lambda,s} (z,y)\, dzds\\
    &\quad +\lambda \int_0^{t/2} \int_{\hs} W^{\alpha}_{0,s} (x,z) z_d^{-\alpha} W^{\alpha}_{\lambda,t-s} (z,y)\, dzds. 
\end{align}
We have that $D_{\lambda,t}^{\alpha, 0}(x,y)=t^{-d/\alpha}F\left(t^{-1/\alpha}x, t^{-1/\alpha}y\right)$, $x,y\in\hs$, $t>0$ (see comments after \cite[Theorem~9]{FM23JFA}) for certain smooth function $F:\hs\times \hs \to \mathbb R$. Thus, for every $k\in \mathbb N$, there exists an smooth function $F_k:\hs\times \hs \to \mathbb R$ for which
\begin{equation}\label{eq: P1}
    t^k\partial_t^k D_{\lambda,t}^{\alpha, 0}(x,y)=t^{-d/\alpha}F_k\left(t^{-1/\alpha}x, t^{-1/\alpha}y\right), \quad x,y\in\hs, t>0.
\end{equation}
Given $k\in \mathbb N$, from \eqref{eq: duhamel} we can write
\begin{align*}
    \partial_t^k D_{\lambda,t}^{\alpha, 0}(x,y)&=\sum_{j=0}^{k-1}a_{j,k} \int_{\hs} \partial_t^j W^{\alpha}_{0,t/2} (x,z) z_d^{-\alpha} \partial_t^{k-1-j} W^{\alpha}_{\lambda,t/2} (z,y) \, dz\\
    &\quad +\lambda \int_0^{t/2} \int_{\hs} \partial_t^k W^{\alpha}_{0,t-s} (x,z) z_d^{-\alpha} W^{\alpha}_{\lambda,s} (z,y)\, dzds\\
    &\quad +\lambda \int_0^{t/2} \int_{\hs} W^{\alpha}_{0,s} (x,z) z_d^{-\alpha} \partial_t^k W^{\alpha}_{\lambda,t-s} (z,y)\, dzds, \quad x,y\in\hs, t>0,
\end{align*}
where $a_{j,k}\in \mathbb R$ for each $j=1,\dots, k-1$.

Assume now that $x,y\in \hs$ with $|x-y|\leq (x_d\wedge y_d)/2$ and $0<t^{1/\alpha}\leq x_d\vee y_d$. Taking into account the scaling property \eqref{eq: P1}, we are going to estimate
\begin{equation*}
\left. \partial_t^k D_{\lambda,t}^{\alpha, 0}(x,y)\right|_{t=1}.
\end{equation*}

We define, for every $j=1,\dots, k-1$, the integrals involved in the above summation as
\begin{equation*}
H_j(x,y,t)=\int_{\hs} \partial_t^j W^{\alpha}_{0,t/2} (x,z) z_d^{-\alpha} \partial_t^{k-1-j} W^{\alpha}_{\lambda,t/2} (z,y) \, dz, \quad x,y\in\hs, t>0,
\end{equation*}
and we decompose each one of them as follows
\begin{align*}
    H_j(x,y,t)&=\int_{\{z\in\hs: z_d>x_d/2\}}\partial_t^j W^{\alpha}_{0,t/2} (x,z) z_d^{-\alpha} \partial_t^{k-1-j} W^{\alpha}_{\lambda,t/2} (z,y) \, dz\\
    &\quad +\int_{\{z\in\hs: z_d\leq x_d/2\}}\partial_t^j W^{\alpha}_{0,t/2} (x,z) z_d^{-\alpha} \partial_t^{k-1-j} W^{\alpha}_{\lambda,t/2} (z,y) \, dz\\
    &:=H_{j,1}(x,y,t)+H_{j,2}(x,y,t), \quad x,y\in\hs, t>0.
\end{align*}

We notice first that, since $x_d\sim y_d$ (because $|x-y|\leq (x_d\wedge y_d)/2$), when $t=1$ we have $x_d\sim y_d\gtrsim 1$.

By using Proposition~\ref{prop: 2.2}\ref{item: prop 2.2.a}, for $\epsilon\in (0,1)$ we have
\begin{align*}
    |H_{j,1}(x,y,1)|&\lesssim x_d^{-\alpha} \int_{\hs} \left(\frac{x_d}{x_d+1}\right)^{(\alpha-1)_+}\left(\frac{z_d}{z_d+1}\right)^{(\alpha-1)_+}\left(\frac{1}{1+|x-z|}\right)^{(d+\alpha)(1-\epsilon)}\\
    &\quad \times\left(\frac{y_d}{y_d+1}\right)^{(\alpha-1)_+}\left(\frac{z_d}{z_d+1}\right)^{(\alpha-1)_+}\left(\frac{1}{1+|y-z|}\right)^{(d+\alpha)(1-\epsilon)}\, dz.
\end{align*}

We define $\alpha'=\alpha(1-\epsilon)-d\epsilon$ and consider $0<\epsilon<\alpha/(\alpha+d)$. Then, $0<\alpha'<\alpha<2$, and we get
\begin{align*}
    |H_{j,1}(x,y,1)|&\lesssim x_d^{-\alpha} \int_{\hs} \left(\frac{x_d}{x_d+1}\right)^{(\alpha'-1)_+}\left(\frac{z_d}{z_d+1}\right)^{2(\alpha'-1)_+}\left(\frac{y_d}{y_d+1}\right)^{(\alpha'-1)}\\
    &\quad \times\left(\frac{1}{1+|x-z|}\right)^{d+\alpha'}\left(\frac{1}{1+|y-z|}\right)^{d+\alpha'}\, dz.    
\end{align*}
According to \cite[Theorem~9]{FM23JFA}, we have
\begin{align*}
    |H_{j,1}(x,y,1)|&\lesssim x_d^{-\alpha} \int_{\hs} W^{\alpha'}_{0,1}(x,z)W^{\alpha'}_{0,1}(z,y)\, dz\\
    &=Cx_d^{-\alpha} W^{\alpha'}_{0,2}(x,y)\\
    &\lesssim x_d^{-\alpha} \left(\frac{x_d}{x_d+1}\right)^{(\alpha'-1)_+}\left(\frac{y_d}{y_d+1}\right)^{(\alpha'-1)}\left(\frac{1}{1+|x-y|}\right)^{d+\alpha'}.
\end{align*}
We now estimate $H_{j,2}$. If $z_d\leq x_d/2$, since $|x-y|<(x_d\wedge y_d)/2$, we deduce that $|y_d-z_d|\sim y_d\sim x_d$ and $|x_d-z_d|\sim x_d$. Then, if $z_d\leq x_d/2$, we have $|x-z|\sim |x'-z'|+x_d$ and $|y-z|\sim |y'-z'|+x_d$. By \cite[Lemma~22]{FM23JFA} with $N=d-1\geq 1$, $\beta=\alpha(1-\epsilon)+1-d\epsilon$ and $r=s=1+x_d$ we get that whenever $z_d\leq x_d/2$,
\begin{align*}
    \int_{\mathbb R^{d-1}} & \left|\left. \partial_t^j W^{\alpha}_{0,t/2}(x,z)\right|_{t=1}\right|\left|\left. \partial_t^{k-1-j} W^{\alpha}_{\lambda,t/2}(z,y)\right|_{t=1}\right| dz'\\
    &\lesssim \left(\frac{x_d}{x_d+1}\right)^{(\alpha-1)_+}\left(\frac{z_d}{z_d+1}\right)^{(\alpha-1)_++\mathfrak{p}}\left(\frac{y_d}{y_d+1}\right)^{\mathfrak{p}} \\
    &\quad \times \int_{\mathbb R^{d-1}} \left(\frac{1}{1+|x'-z'|+x_d}\right)^{(d+\alpha)(1-\epsilon)}\left(\frac{1}{1+|y'-z'|+x_d}\right)^{(d+\alpha)(1-\epsilon)} dz'\\
    &\lesssim \left(\frac{x_d}{x_d+1}\right)^{(\alpha-1)_+}\left(\frac{z_d}{z_d+1}\right)^{(\alpha-1)_++\mathfrak{p}}\left(\frac{y_d}{y_d+1}\right)^{\mathfrak{p}} \frac{1}{x_d^{\alpha(1-\epsilon)+1-d\epsilon}}\\
    &\quad \times \frac{1}{(1+x_d+|y'-x'|)^{(d+\alpha)(1-\epsilon)}}\\
    &\lesssim \left(\frac{x_d}{x_d+1}\right)^{(\alpha-1)_+}\left(\frac{z_d}{z_d+1}\right)^{(\alpha-1)_++\mathfrak{p}}\left(\frac{y_d}{y_d+1}\right)^{\mathfrak{p}} \frac{1}{x_d^{\alpha(1-\epsilon)+1-d\epsilon+(d+\alpha)(1-\epsilon)}}.
\end{align*}
From \cite[p.~26]{FM23JFA} we also have that
\begin{equation*}\int_0^{x_d/2} \left(\frac{z_d}{z_d+1}\right)^{(\alpha-1)_++\mathfrak{p}} \frac{dz_d}{z_d^\alpha}\sim \boldsymbol{1}_{\alpha\geq 1}+(\ln(1+x_d))\boldsymbol{1}_{\alpha=1}+x_d^{1-\alpha}\boldsymbol{1}_{\alpha< 1}.\end{equation*}
Then, \refstepcounter{BDQ}\label{pag: cota xd>=1}
\begin{align*}
   &\frac{1}{x_d^{\alpha(1-\epsilon)+1-d\epsilon+(d+\alpha)(1-\epsilon)}} \int_0^{x_d/2} \left(\frac{z_d}{z_d+1}\right)^{(\alpha-1)_++\mathfrak{p}} \frac{dz_d}{z_d^\alpha}\\
   &\lesssim \frac{1}{x_d^{\alpha+(d+\alpha)(1-\epsilon)}} \frac{1}{x_d^{1-(d+\alpha)\epsilon}} \left(\boldsymbol{1}_{\alpha\geq 1}+(\ln(1+x_d))\boldsymbol{1}_{\alpha=1}+x_d^{1-\alpha}\boldsymbol{1}_{\alpha< 1}\right)\\
   &\lesssim \frac{1}{x_d^{\alpha+(d+\alpha)(1-\epsilon)}}\lesssim \frac{1}{x_d^\alpha} \left(\frac{1}{1+|x-y|}\right)^{(d+\alpha)(1-\epsilon)},
\end{align*}
provided that $0<\epsilon<\alpha/(d+\alpha)$ and $x_d\geq 1$. 

We consider now 
\begin{align*}
    H_k(x,y,t)&=\int_0^{t/2}\int_{\{z\in\hs: z_d>x_d/2\}}\partial_t^k W^{\alpha}_{0,t-s} (x,z) z_d^{-\alpha} W^{\alpha}_{\lambda,s} (z,y) \, dz ds\\
    &\quad +\int_0^{t/2}\int_{\{z\in\hs: z_d\leq x_d/2\}}\partial_t^k W^{\alpha}_{0,t-s} (x,z) z_d^{-\alpha} W^{\alpha}_{\lambda,s} (z,y) \, dz ds\\
    &:=H_{k,1}(x,y,t)+H_{k,2}(x,y,t), \quad x,y\in\hs, t>0.
\end{align*}
First, we estimate both terms using \eqref{eq: 2.0} and Proposition~\ref{prop: 2.2}\ref{item: prop 2.2.a}. Recalling that $\mathfrak{p}\geq (\alpha-1)_+$, for $\ell=1,2$ we get
\begin{align}\label{eq: estimacion Hkl}
    |& H_{k,\ell}(x,y,1)|\nonumber\\
    &\lesssim \int_0^{1/2} \int_{\hs} (1-s)^{-k-d/\alpha} \left(\frac{x_d}{(1-s)^{1/\alpha}+x_d}\right)^{(\alpha-1)_+} \left(\frac{z_d}{(1-s)^{1/\alpha}+z_d}\right)^{(\alpha-1)_+}\nonumber\\
    &\quad \times \left(\frac{(1-s)^{1/\alpha}}{(1-s)^{1/\alpha}+|x-z|}\right)^{(d+\alpha)(1-\epsilon)} z_d^{-\alpha} \left(\frac{y_d}{s^{1/\alpha}+y_d}\right)^{\mathfrak{p}}\left(\frac{z_d}{s^{1/\alpha}+z_d}\right)^{\mathfrak{p}}\frac{1}{s^{d/\alpha}} \nonumber\\
    &\quad \times \left(\frac{s^{1/\alpha}}{s^{1/\alpha}+|z-y|}\right)^{(d+\alpha)(1-\epsilon)}\,dzds. 
\end{align}
We define, as before, $\alpha'=\alpha(1-\epsilon)-d\epsilon$ for $0<\epsilon<\alpha/(d+\alpha)$, so $0<\alpha'<\alpha<2$. Thus, from Proposition~\ref{prop: 2.1}\ref{item: prop 2.1 a} with $\lambda=0$, we have
\begin{align*}
    &|H_{k,1}(x,y,1)|\lesssim \frac{1}{x_d^\alpha} \int_0^{1/2} \int_{\hs} \left(\frac{x_d}{(1-s)^{1/\alpha}+x_d}\right)^{(\alpha'-1)_+} \left(\frac{z_d}{(1-s)^{1/\alpha}+z_d}\right)^{(\alpha'-1)_+} \\
    &\quad \times\frac{1}{s^{d/\alpha}} \left(\frac{y_d}{s^{1/\alpha}+y_d}\right)^{(\alpha'-1)_+} \left(\frac{z_d}{s^{1/\alpha}+z_d}\right)^{(\alpha'-1)_+}\\
    &\quad\times \left(\frac{(1-s)^{1/\alpha}}{(1-s)^{1/\alpha}+|x-z|}\right)^{d+\alpha'}\left(\frac{s^{1/\alpha}}{s^{1/\alpha}+|z-y|}\right)^{d+\alpha'}\, dzds\\
    &\lesssim \frac{1}{x_d^\alpha} \int_0^{1/2} \int_{\hs} \left(\frac{x_d}{((1-s)^{\alpha'/\alpha})^{1/\alpha'}+x_d}\right)^{(\alpha'-1)_+} \left(\frac{z_d}{((1-s)^{\alpha'/\alpha})^{1/\alpha'}+z_d}\right)^{(\alpha'-1)_+} \\
    &\quad \times\frac{1}{(s^{\alpha'/\alpha})^{d/\alpha'}}\left(\frac{y_d}{(s^{\alpha'/\alpha})^{1/\alpha'}+y_d}\right)^{(\alpha'-1)_+}  \left(\frac{z_d}{(s^{\alpha'/\alpha})^{1/\alpha'}+z_d}\right)^{(\alpha'-1)_+}\\
    &\quad \times \left(\frac{((1-s)^{\alpha'/\alpha})^{1/\alpha'}}{((1-s)^{\alpha'/\alpha})^{1/\alpha'}+|x-z|}\right)^{d+\alpha'}\left(\frac{(s^{\alpha'/\alpha})^{1/\alpha'}}{(s^{\alpha'/\alpha})^{1/\alpha'}+|z-y|}\right)^{d+\alpha'}\, dzds\\
    &\sim \frac{1}{x_d^\alpha} \int_0^{1/2} W^{\alpha'}_{0,(1-s)^{\alpha'/\alpha}}(x,z) W^{\alpha'}_{0,s^{\alpha'/\alpha}}(z,y)\, dzds\\
    & \lesssim \frac{1}{x_d^\alpha} \int_0^{1/2} W^{\alpha'}_{0,(1-s)^{\alpha'/\alpha}+s^{\alpha'/\alpha}}(x,y)\, dzds\\
    &\lesssim \frac{1}{x_d^\alpha} \int_0^{1/2} \left(\frac{x_d}{((1-s)^{\alpha'/\alpha}+s^{\alpha'/\alpha})^{1/\alpha'}+x_d}\right)^{(\alpha'-1)_+} \\
    &\qquad \times\left(\frac{y_d}{((1-s)^{\alpha'/\alpha}+s^{\alpha'/\alpha})^{1/\alpha'}+y_d}\right)^{(\alpha'-1)_+}\\
    &\qquad \times\left(\frac{((1-s)^{\alpha'/\alpha}+s^{\alpha'/\alpha})^{1/\alpha'}}{((1-s)^{\alpha'/\alpha}+s^{\alpha'/\alpha})^{1/\alpha'}+|x-y|}\right)^{\alpha'+d}\, ds\\
    &\lesssim \frac{1}{x_d^\alpha} \int_0^{1/2} \left(\frac{x_d}{2^{-1/\alpha}+x_d}\right)^{(\alpha'-1)_+} \left(\frac{y_d}{2^{-1/\alpha}+y_d}\right)^{(\alpha'-1)_+} \left(\frac{2^{1/\alpha}}{2^{1/\alpha}+|x-y|}\right)^{\alpha'+d}\, ds\\
    &\lesssim \frac{1}{x_d^\alpha} \left(\frac{x_d}{1+x_d}\right)^{(\alpha'-1)_+} \left(\frac{y_d}{1+y_d}\right)^{(\alpha'-1)_+} \left(\frac{1}{1+|x-y|}\right)^{\alpha'+d}.
\end{align*}

In order to estimate $H_{k,2}$, we notice first that from \cite[Lemma~22]{FM23JFA},
\begin{align*}
    \int_{\mathbb R^{d-1}} & \left(\frac{(1-s)^{1/\alpha}}{(1-s)^{1/\alpha}+|x-z|}\right)^{(d+\alpha)(1-\epsilon)} \left(\frac{s^{1/\alpha}}{s^{1/\alpha}+|z-y|}\right)^{(d+\alpha)(1-\epsilon)}\,dz'\\
    &\lesssim \int_{\mathbb R^{d-1}} \left(\frac{(1-s)^{1/\alpha}}{(1-s)^{1/\alpha}+x_d+|x'-z'|}\right)^{(d+\alpha)(1-\epsilon)} \\
    &\qquad \times\left(\frac{s^{1/\alpha}}{s^{1/\alpha}+x_d+|z'-y'|}\right)^{(d+\alpha)(1-\epsilon)}\,dz'\\
    &\lesssim \frac{((1-s)s)^{(d+\alpha)(1-\epsilon)/\alpha}}{x_d^{\alpha(1-\epsilon)-d\epsilon+1}}\frac{1}{(x_d+|x'-y'|)^{(d+\alpha)(1-\epsilon)}},
\end{align*}
provided that $z_d\leq x_d/2$.

Then, by taking $0<\epsilon<(\alpha-\mathfrak{p})/(d+\alpha)$ (which is possible since $\mathfrak{p}< \alpha$), from \eqref{eq: estimacion Hkl} we get
\begin{align*}
    |H_{k,2}(x,y,1)|&\lesssim \int_0^{1/2} \int_0^{x_d/2} \left(\frac{x_d}{(1-s)^{1/\alpha}+x_d}\right)^{(\alpha-1)_+} \left(\frac{z_d}{(1-s)^{1/\alpha}+z_d}\right)^{(\alpha-1)_+} \\
    &\quad \times z_d^{-\alpha} \left(\frac{y_d}{s^{1/\alpha}+y_d}\right)^{\mathfrak{p}}\left(\frac{z_d}{s^{1/\alpha}+z_d}\right)^{\mathfrak{p}}\frac{1}{s^{d/\alpha}}\\
    &\quad \times\frac{((1-s)s)^{(d+\alpha)(1-\epsilon)/\alpha}}{x_d^{\alpha(1-\epsilon)-d\epsilon+1}}\frac{1}{(x_d+|x'-y'|)^{(d+\alpha)(1-\epsilon)}}\, dz_d ds\\
    &\lesssim \int_0^{x_d/2} \left(\int_0^{1/2} \left(\frac{z_d}{(1-s)^{1/\alpha}+z_d}\right)^{(\alpha-1)_+}\left(\frac{z_d}{s^{1/\alpha}+z_d}\right)^{\mathfrak{p}}\right.\\
    & \hspace{-0.55cm}\left.\phantom{\int_0^{1/2}}\times\frac{((1-s)s)^{(d+\alpha)(1-\epsilon)/\alpha} s^{-d/\alpha}}{(x_d+|x'-y'|)^{(d+\alpha)(1-\epsilon)}} ds\right)\frac{z_d^{-\alpha}}{x_d^{\alpha(1-\epsilon)-d\epsilon+1}} dz_d\\
    &\lesssim \int_0^{x_d/2} \frac{z_d^{-\alpha}}{x_d^{\alpha(1-\epsilon)-d\epsilon+1+(d+\alpha)(1-\epsilon)}} (1\wedge z_d)^{\mathfrak{p}+(\alpha-1)_+}\, dz_d.
\end{align*}
Here, we have used that, for the values of $\epsilon$ indicated above, 
\begin{align*}
    s^{(d+\alpha)(1-\epsilon)/\alpha-d/\alpha} \left(1\wedge \frac{z_d}{s^{1/\alpha}}\right)^{\mathfrak{p}} &= \left(s^{\left(1-\epsilon(d+\alpha)/\alpha\right)/\mathfrak{p}}\wedge \left(s^{\left(1-\epsilon(d+\alpha)/\alpha\right)/\mathfrak{p}-1/\alpha}\right)z_d\right)^{\mathfrak{p}}\\
    &\lesssim \left(1\wedge s^{(\alpha-\mathfrak{p}-\epsilon(d+\alpha))/(\alpha\mathfrak{p})}z_d\right)^{\mathfrak{p}}\leq (1\wedge z_d)^{\mathfrak{p}}.
\end{align*}
Finally, since $0<\epsilon< \alpha/(d+\alpha)$, by proceeding as in page~\pageref{pag: cota xd>=1} we get
\begin{equation*}|H_{k,2}(x,y,1)|\lesssim \frac{1}{x_d^\alpha} \left(\frac{1}{1+|x-y|}\right)^{(d+\alpha)(1-\epsilon)}, \quad x_d\geq 1.\end{equation*}

In a similar way, we can also see that
\begin{equation*}|\widetilde{H}_{k}(x,y,1)|\lesssim \frac{1}{x_d^\alpha} \left(\frac{1}{1+|x-y|}\right)^{(d+\alpha)(1-\epsilon)}, \quad x_d\geq 1,\end{equation*}
where
\begin{equation*}\widetilde{H}_{k}(x,y,t)=\int_0^{t/2}\int_{\hs} W^{\alpha}_{0,s} (x,z) z_d^{-\alpha} \partial_t^k W^{\alpha}_{\lambda,t-s} (z,y) \, dz ds.\end{equation*}

By putting together all of the above estimates, and using the scaling property \eqref{eq: P1}, we obtain
\begin{equation*}\left|t^{k-1} \partial_t^k D_{\lambda, t}^{\alpha, 0}(x,y)\right|\lesssim \frac{t^{-d/\alpha}}{(x_d\vee y_d)^\alpha} \left(\frac{t^{1/\alpha}}{t^{1/\alpha}+|x-y|}\right)^{(d+\alpha)(1-\epsilon)},\end{equation*}
for $0<\epsilon<(\alpha-\mathfrak{p})/(d+\alpha)$.

Consequently, recalling the definition of the local part $L$ in \eqref{eq: L}, for certain constant $C>0$ we have
\begin{align*}
    &\int_{\hs}  \int_0^\infty \left|t^{k-1} \partial_t^k D_{\lambda, t}^{\alpha, 0}(x,y)\right|\chi_{L} (x,y,t) \, dt dy\\
    &\lesssim \int_{\{y\in \hs: C^{-1}x_d\leq y_d\leq Cx_d\}} \int_0^{(x_d\vee y_d)^\alpha} \frac{t^{-d/\alpha}}{(x_d\vee y_d)^\alpha} \left(\frac{t^{1/\alpha}}{t^{1/\alpha}+|x-y|}\right)^{(d+\alpha)(1-\epsilon)} \, dt dy\\
    &\lesssim \int_0^{(Cx_d)^\alpha} t^{-d/\alpha} \int_{\{y\in \hs: C^{-1}x_d\leq y_d\leq Cx_d\}} x_d^{-\alpha}\left(\frac{t^{1/\alpha}}{t^{1/\alpha}+|x-y|}\right)^{(d+\alpha)(1-\epsilon)} \, dy dt\\
    &\lesssim x_d^{-\alpha}\int_0^{(Cx_d)^\alpha} t^{-d/\alpha} \int_{\mathbb R^d} \left(\frac{t^{1/\alpha}}{t^{1/\alpha}+|x-y|}\right)^{(d+\alpha)(1-\epsilon)} \, dy dt\\
    &\lesssim x_d^{-\alpha}\int_0^{(Cx_d)^\alpha} \int_{\mathbb R^d} \frac{1}{(1+|z|)^{(d+\alpha)(1-\epsilon)}} \, dy dt\\
    &\lesssim 1,
\end{align*}
for each $x\in \hs$, provided that $0<\epsilon<(\alpha-\mathfrak{p})/(d+\alpha)$.

By symmetry, we also deduce that
\begin{equation*}\int_{\hs}  \int_0^\infty \left|t^{k-1} \partial_t^k D_{\lambda, t}^{\alpha, 0}(x,y)\right|\chi_{L} (x,y,t) \, dt dx\lesssim 1, \quad y\in \hs,\end{equation*}
when $0<\epsilon<\alpha/(d+\alpha)$.

Schur's test allows us to conclude thet the integral operator
\begin{equation*}\mathbb J_\lambda^{\alpha, k}(f)(x):=\int_{\hs}  \int_0^\infty \left|t^{k-1} \partial_t^k D_{\lambda, t}^{\alpha, 0}(x,y)\right|\chi_{L} (x,y,t) f(y)\, dt dy, \quad x\in \hs,\end{equation*}
is bounded on $L^p(\hs)$ for every $1\leq p\leq \infty$.

\section{Proof of Theorem~\ref{thm: unweighted case} for \mathinhead{\alpha=2}{alpha=2}}\label{sec: proof-teo1.1-alfa=2}

\subsection{Global part}

For the case $\alpha=2$ we know, by Proposition~\ref{prop: 2.2}\ref{item: prop 2.2.b}, that
\begin{align*}
    \left|t^k \partial_t^k W^{2}_{\lambda,t}(x,y)\right|&\lesssim \left( 1\wedge \frac{x_d}{t^{1/2}}\right)^{\mathfrak{p}} \left( 1+\frac{y_d}{t^{1/2}}\right)^{\mathfrak{p}} \frac{1}{t^{d/2}} e^{-\frac{c|x-y|^2}{t}}\\
    & \lesssim \left( 1\wedge \frac{x_d}{t^{1/2}}\right)^{\mathfrak{p}} \left( 1+\frac{y_d}{t^{1/2}}\right)^{\mathfrak{p}} \frac{1}{t^{d/2}} \left(\frac{t^{1/2}}{t^{1/2}+|x-y|}\right)^{d+2}, 
\end{align*}
for any $x,y\in \hs, t>0$. 

With this estimate, we can proceed as in the case $\alpha\in (0,2)$ (see \S\ref{sec: proof-teo1.1-alfa<2}) and prove that, for every $k\in \mathbb N$, $k\geq 1$, the operator $J_\lambda^{2, k}$ given by
\begin{equation*}J_\lambda^{2, k} (f)(x)=\int_{\hs} J_\lambda^{2, k} (x,y) f(y)\, dy, \quad x\in \hs,\end{equation*}
whose kernel $J_\lambda^{2, k} (x,y)$ is as in \eqref{eq: def nucleo Jalfa} with $\alpha=2$, 
is bounded on $L^p(\hs)$ for every $1<p<\infty$.

\subsection{Local part}

When dealing with the local part and $\alpha=2$, we keep the notation used in Section~\ref{sec: proof-teo1.1-alfa<2 local}. We recall that in the local part, we are considering $x,y\in \hs$ with $|x-y|\leq (x_d\wedge y_d)/2$ and $0<t^{1/\alpha}\leq x_d\vee y_d$.

Let $j=1,\dots, k-1$. According to Proposition~\ref{prop: 2.2}\ref{item: prop 2.2.b} we get the following estimate 
\begin{align*}
    |H_{j,1}(x,y,1)|&\lesssim x_d^{-2} \int_{\hs} \left(\frac{x_d}{1+x_d}\right)\left(\frac{z_d}{1+z_d}\right)^{1+\mathfrak{p}}\left(\frac{y_d}{1+y_d}\right)^{\mathfrak{p}} e^{-c(|x-z|^2+|z-y|^2)} \, dz\\
    &\lesssim x_d^{-2} \int_{\hs} \left(\frac{x_d}{1+x_d}\right)\left(\frac{z_d}{1+z_d}\right)^{1+\mathfrak{p}}\left(\frac{y_d}{1+y_d}\right)^{\mathfrak{p}} e^{-\frac c2 |x-y|^2}e^{-\frac c2|z-y|^2} \, dz\\
    &\lesssim x_d^{-2}  \left(\frac{x_d}{1+x_d}\right) \left(\frac{y_d}{1+y_d}\right)^{\mathfrak{p}} e^{-\frac c2|x-y|^2}\int_{\mathbb R^d} e^{-\frac c2|x-z|^2}\, dz\\
    &\lesssim x_d^{-2}e^{-\frac c2|x-y|^2}. 
\end{align*}
Therefore, for $x,y\in \hs$ with $|x-y|\leq (x_d\wedge y_d)/2$ and $1 \leq x_d\vee y_d$, we get
\begin{equation*}|H_{j,1}(x,y,1)|\lesssim \frac{1}{x_d^2} \left(\frac{1}{1+|x-y|}\right)^{d+\alpha}.\end{equation*}
Similarly, we get
\begin{equation*}|H_{j,2}(x,y,1)|\lesssim \frac{1}{x_d^2} \left(\frac{1}{1+|x-y|}\right)^{d+\alpha}.\end{equation*}

On the other hand, Proposition~\ref{prop: 2.2}\ref{item: prop 2.2.b} leads to
\begin{align*}
    |H_{k,1}&(x,y,1)|\\
    &\lesssim \int_0^{1/2} \int_{\{z\in \hs : z_d>x_d/2\}} (1-s)^{-k-d/2}\left(\frac{x_d}{(1-s)^{1/2}+x_d}\right) \left(\frac{z_d}{(1-s)^{1/2}+z_d}\right) \nonumber\\
    &\quad \times z_d^{-2} \left(\frac{y_d}{s^{1/2}+y_d}\right)^{\mathfrak{p}}\left(\frac{z_d}{s^{1/2}+z_d}\right)^{\mathfrak{p}}\frac{1}{s^{d/2}}  e^{-c\frac{|x-z|^2}{1-s}-c\frac{|z-y|^2}{s}}\,dzds\\
    &\lesssim x_d^{-2}\int_0^{1/2} \frac{x_d}{(1-s)^{1/2}+x_d}\left(\frac{y_d}{s^{1/2}+y_d}\right)^{\mathfrak{p}} e^{-\frac c2\frac{|x-y|^2}{1-s}} \int_{\mathbb R^d} \frac{e^{-\frac c2\frac{|y-z|^2}{s}}}{s^{d/2}} \,dz ds\\
    &\lesssim x_d^{-2}\int_0^{1/2} \frac{x_d}{(1-s)^{1/2}+x_d}\left(\frac{y_d}{s^{1/2}+y_d}\right)^{\mathfrak{p}} e^{-\frac c2\frac{|x-y|^2}{1-s}}\, ds\\
    &\lesssim x_d^{-2} e^{-\frac c2|x-y|^2},
\end{align*}
for $(x,y,1)\in L$.

We now study $H_{k,2}$. Using again Proposition~\ref{prop: 2.2}\ref{item: prop 2.2.b}, it yields, as before,
\begin{align*}
    |H_{k,2}&(x,y,1)|\\
    &\lesssim \int_0^{1/2} \int_{\{z\in \hs : z_d\leq x_d/2\}} (1-s)^{-k-d/2}\left(\frac{x_d}{(1-s)^{1/2}+x_d}\right) \left(\frac{z_d}{(1-s)^{1/2}+z_d}\right) \nonumber\\
    &\quad \times z_d^{-2} \left(\frac{y_d}{s^{1/2}+y_d}\right)^{\mathfrak{p}}\left(\frac{z_d}{s^{1/2}+z_d}\right)^{\mathfrak{p}}\frac{1}{s^{d/2}}  e^{-c\frac{|x-z|^2}{1-s}-c\frac{|z-y|^2}{s}}\,dzds.
\end{align*}
Note that, for each $s\in (0,\frac12)$, we can write 
\begin{equation*}
    \int_{\mathbb R^{d-1}} e^{-c\frac{|x'-z'|^2}{1-s}-c\frac{|y'-z'|^2}{s}} dz'\lesssim \int_{\mathbb R^{d-1}} e^{-c\frac{|w|^2}{s}} dw\lesssim s^{(d-1)/2}.
\end{equation*}
Then, for $s\in (0,\frac12)$,
\begin{align*}
    &\int_{\{z\in \hs : z_d\leq x_d/2\}} \left(\frac{z_d}{(1-s)^{1/2}+z_d}\right) \left(\frac{z_d}{s^{1/2}+z_d}\right)^{\mathfrak{p}} z_d^{-2} e^{-c\frac{|x-z|^2}{1-s}-c\frac{|z-y|^2}{s}}\,dz\\
    &\lesssim \int_0^{x_d/2} \left(\frac{z_d}{(1-s)^{1/2}+z_d}\right) \left(\frac{z_d}{s^{1/2}+z_d}\right)^{\mathfrak{p}} z_d^{-2} s^{(d-1)/2} e^{-\frac c2\frac{|x-y|^2}{1-s}-c\frac{|z_d-y_d|^2}{s}}\, dz_d.
\end{align*}
Combining the above estimates, we get
\begin{align*}
    |H_{k,2}(x,y,1)|&\lesssim \int_0^{x_d/2} \int_0^{1/2} s^{(d-1)/2} \left(\frac{z_d}{(1-s)^{1/2}+z_d}\right) \left(\frac{z_d}{s^{1/2}+z_d}\right)^{\mathfrak{p}} z_d^{-2}\\
    &\quad \times\left(\frac{(1-s)^{1/2}}{(1-s)^{1/2}+x_d+|y'-x'|}\right)^{d+4}\, ds dz_d\\
    &\lesssim \int_0^{x_d/2} \int_0^{1/2} s^{(d-1)/2} \left(1\wedge \frac{z_d}{(1-s)^{1/2}}\right)\left(1\wedge \frac{z_d}{s^{1/2}}\right)^{\mathfrak{p}} \, ds \\
    &\quad \times \frac{dz_d}{z_d^2} \left(\frac{1}{x_d+|y'-x'|}\right)^{d+4}\\
    &\lesssim \int_0^{x_d/2} \int_0^{1/2} s^{(d-1)/2-1/4}(1\wedge z_d)^{1+1/4}\frac{dz_d}{z_d^2} \frac{1}{x_d^{d+4}}\\
    &\lesssim \int_0^1 z_d^{d-3/4} dz_d +\int_1^\infty \frac{dz_d}{z_d^2} \frac{1}{x_d^{d+4}}\\
    &\lesssim \frac{1}{x_d^{d+4}}\\
    &\lesssim \frac{1}{x_d^2}\left(\frac{1}{1+|x-y|}\right)^{d+2}, \quad \frac{x_d}{2}\geq 1.
\end{align*}
In a similar way, we deduce that
\begin{equation*}|\widetilde{H}_k(x,y,1)|\lesssim \frac{1}{x_d^2}\left(\frac{1}{1+|x-y|}\right)^{d+2}, \quad x_d\geq 1.\end{equation*}
We now put all of the above estimates together obtaining
\begin{equation*}\left|\left.\partial_t^k D_{\lambda}^{2,0}(x,y)\right|_{t=1}\right|\lesssim \frac{1}{x_d^2}\left(\frac{1}{1+|x-y|}\right)^{d+2}.\end{equation*}
Therefore,
\begin{equation*}\left|t^{k-1}\partial_t^k D_{\lambda}^{2,0}(x,y)\right|\lesssim \frac{1}{t^{d/2}}\left(\frac{t^{1/2}}{t^{1/2}+|x-y|}\right)^{(d+2)(1-\epsilon)} \frac{1}{(x_d\vee y_d)^2},\end{equation*}
with $0<\epsilon<(2-\mathfrak{p})/(d+2)=(\alpha-\mathfrak{p})/(d+\alpha)$.

\section{Proof of Theorem~\ref{thm: weighted case}}

We proceed as in the proof of~\cite[Proposition 3.5]{N} and of the weighted property in~\cite[Theorem 1.1]{ABR} by using~\cite[Proposition 2.3]{BB} (see also~\cite[Theorem 6.6]{BZ}). We define, for every $t>0$,
\begin{equation*}
A^\alpha_{\lambda, t} = (I - W^\alpha_{\lambda,t})^m,
\end{equation*}
for $m\in \mathbb{N}$ to be fixed later. 

For every $x\in\mathbb{R}^d$, $r$, $s>0$, let $B(x,r) = \{y\in \mathbb{R}^d: |x-y|<r\}$ be the usual ball in $\mathbb{R}^d$, $sB(x,r) = B(x,sr)$ and $\mathcal{B}(x,r) = B(x,r)\cap \hs$. Also, let us consider for a ball $B\subset\mathbb{R}^d$, the sets $S_0(B) = B$ and for $j\in\mathbb{N}$, $j\geq 1$, $S_j(B) = 2^{j}B\setminus 2^{j-1}B$. We denote also, for $B\subset\mathbb{R}^d$ and $j\in\mathbb{N}$,  $S_j(\mathcal{B}) = S_j(B) \cap \hs$. 

From now on, we will assume that $1<p\leq q<\infty$. Suppose that $\mathcal{B}=B\cap \hs$, where $B = B(x_B,r_B)$ with $x_B\in\hs$, $r_B>0$ and that $f$ is a smooth function supported in $\mathcal{B}$. 

We consider first the case $\alpha = 2$. By using~\cite[(2.17)]{ABR}, from Proposition~\ref{prop: 2.2}~\ref{item: prop 2.2.b} we deduce
\begin{equation*}
    \left(\frac{1}{|S_j(\mathcal{B})|}\int_{S_j(\mathcal{B})}
    |A^2_{\lambda,r^2_B}(f)(x)|^q dx
    \right)^{1/q}
    \leq C \frac{e^{-c2^{2j}}}{2^{j\alpha/q}}\left( \frac{1}{|\mathcal{B}|} \int_\mathcal{B} |f(x)|^p dx
    \right)^{1/p},
\end{equation*}
for $j\in\mathbb{N}$ where $C>0$ does not depend on $j$, $\beta$ or $f$. 

On the other hand, the arguments in~\cite[pages 13 and 14]{ABR} allow us to obtain that
\begin{align*}
        & \left(\frac{1}{|S_j(\mathcal{B})|}\int_{S_j(\mathcal{B})}
    |V_\rho(\{t^k \partial^k_t W^w_{\lambda,t}\}_{t>0})(I - A^2_{\lambda,r^2_B})(f)(x)|^q dx
    \right)^{1/q}&\lesssim 2^{-j(dq + 2m)} f_{q, \mathcal{B}},
    \end{align*}
where $f_{q,\mathcal{B}}=\left( \frac{1}{|\mathcal{B}|}\int_{\mathcal{B}} |f(x)|^q dx\right)^{1/q}$ and the constant does not depend on $j$, $\mathcal{B}$ or $f$.

We choose now $m\in\mathbb{N}$ such that $m>d/2$. Then
    \begin{equation*}\sum_{j=1}^{\infty} 2^{-j(d/q +2m -d)}<\infty.\end{equation*}

    According to Theorem~\ref{thm: unweighted case}, the $\rho$-variation operator $V_{\rho}(\{t^k \partial^k_t W^2_{\lambda,t}\}_{t>0})$ is bound\-ed on $L^q(\hs)$. By using \cite[Proposition 2.3]{BB} we conclude that $V_{\rho}(\{t^k \partial^k_t W^2_{\lambda,t}\}_{t>0})$ is bounded on $L^{r}(\hs,w)$, provided that $p<r<q$ and $w\in A_{r/p} (\hs)\cap \textup{RH}_{(q/r)'}(\hs)$. Taking $p=1$ we obtain that $V_{\rho}(\{t^k \partial^k_t W^2_{\lambda,t}\}_{t>0})$ is bounded on $L^{r}(\hs,w)$, provided that $w\in A_r(\hs)\cap  \textup{RH}_{s'}(\hs)$, $1<r<s<\infty$. Since, if $w\in A_r(\hs)$ with ${1<r<\infty}$, there exists $\beta>1$ such that $w\in \textup{RH}_\beta(\hs)$ we conclude that $V_{\rho}(\{t^k \partial^k_t W^2_{\lambda,t}\}_{t>0})$ is bounded on $L^{r}(\hs,w)$ for every $1<r<\infty$ and $w\in A_r(\hs)$.

    Assume now $0<\alpha<2$. According to Proposition~\ref{prop: 2.2}~\ref{item: prop 2.2.a} and by proceeding as in the proof of~\cite[Theorem 5.1]{BD} we deduce that if $1\leq p<q<\infty$, $\ell\in\mathbb{N}$ and $0<\epsilon<1$, there exists $C>0$ such that for every ball $\mathcal{B} = B(x_B,r_B)\cap \hs$, with $x_B\in\hs$ and $r_B>0$, every $t>0$ and $j\in\mathbb{N}$, we have that
    \begin{align}\label{eq: 5.3}
        & \left(\frac{1}{|S_j(\mathcal{B})|}\int_{S_j(\mathcal{B})}
        |t^\ell \partial^\ell_t W^\alpha_{\lambda,t}(f)(x)|^q dx
        \right)^{1/q}\nonumber
        \\ &\leq C \max \left\{
        \left( \frac{r_B}{t^{1/\alpha}}\right)^{d/p}, \left( \frac{r_B}{t^{1/\alpha}}\right)^d
        \right\}\nonumber
        \\ & \, \times 
        \left( 1+ \frac{t^{1/\alpha}}{2^jr_B}\right)^{d/q}
        \left(1+ \frac{2^jr_B}{t^{1/\alpha}}\right)^{-(d+\alpha)(1-\epsilon)}
        \left( \frac{1}{|{\mathcal{B}}|} \int_{\mathcal{B}} |f(x)|^p dx
        \right)^{1/p},
    \end{align}
for every $f\in L^p(\hs)$ supported in $\mathcal{B}$, and 
\begin{equation}\label{eq: 5.4}
    \begin{split}
        & \left(\frac{1}{|S_j(\mathcal{B})|}\int_{S_j(\mathcal{B})}
    |t^\ell \partial^\ell_t W^\alpha_{\lambda,t}(f)(x)|^q dx
    \right)^{1/q}
    \\ &\lesssim \max \left\{
    \left( \frac{2^\ell r_B}{t^{1/\alpha}}\right)^{d/p}, \left( \frac{2^\ell r_B}{t^{1/\alpha}}\right)^d
    \right\}
    \\ & \, \times 
    \left( 1+ \frac{t^{1/\alpha}}{2^jr_B}\right)^{d/q}
    \left(1+ \frac{2^jr_B}{t^{1/\alpha}}\right)^{-(d+\alpha)(1-\epsilon)}
    \left( \frac{1}{|{S_j(\mathcal{B})}|} \int_{S_j(\mathcal{B})} |f(x)|^p dx
    \right)^{1/p},
    \end{split}
\end{equation}
for every $f\in L^p(S_j(\mathcal{B}))$.

According to~\eqref{eq: 1.1} we obtain
\begin{equation*}
    V_{\rho}\left(\{t^k \partial^k_t W^2_{\lambda,t}\}_{t>0}\right)(f)(x) \leq \int_0^\infty \left|\partial_t (t^k \partial^k_t W^2_{\lambda,t}(f)(x))\right|dt, \quad  x\in\hs.
\end{equation*}

Now, for $\ell\in\mathbb{R}\setminus \{0\}$, we consider the operator
\begin{equation*}
    \mathcal{T}_\ell (f)(x) = \int_0^\infty |t^{\ell-1} \partial_t^\ell W^\alpha_{\lambda,t} (f) (x) | dt, \; x\in\mathbb{R}^d. 
\end{equation*}

Assume that $j\in\mathbb{N}$ and let $\mathcal{B} = B(x_B,r_B)\cap \hs$, with $x_B\in\hs$ and $r_B>0$. By using Minkowski inequality we get
\begin{align*}
        &\left( \frac{1}{S_j(\mathcal{B})|} \int_{S_j(\mathcal{B})} 
        |\mathcal{T}_l ((I-W^\alpha_{\lambda,r_B^\alpha})^m(f))(x)|^q dx\right)^{1/q}
    \\ &\leq \int_0^\infty \left(
\frac{1}{S_j(\mathcal{B})|}
\int_{S_j(\mathcal{B})}
|t^{\ell-1} \partial^l_t W^\alpha_{\lambda,t} 
  ((I-W^\alpha_{\lambda,r_B^\alpha})^m(f))(x)|^q dx\right)^{1/q}dt
\\ &\leq 
\left( \int_0^{r_B^\alpha} + \int_{r_B^\alpha}^\infty \right) 
\left(
\frac{1}{S_j(\mathcal{B})|}
\int_{S_j(\mathcal{B})}
|t^{\ell-1} \partial^l_t W^\alpha_{\lambda,t} 
  ((I-W^\alpha_{\lambda,r_B^\alpha})^m(f))(x)|^q dx\right)^{1/q}dt
  \\ 
  &:= \textup{I}_1 + \textup{I}_2.
    \end{align*}

We now adapt the arguments in~\cite[p. 15 and p. 16]{N}. We begin analysing  $\textup{I}_1$. First observe that
\begin{equation*}
    (I - W^\alpha_{\lambda,r_B^\alpha})^m = 
    \int_0^{r_B^\alpha} \dots \int_0^{r_B^\alpha} \partial_{s_1} \dots\partial_{s_m} W^\alpha_{\lambda,s_1+\dots+s_m} d_{s_1\dots} d_{s_m}.
\end{equation*}

Then, by~\eqref{eq: 5.3} and calling $\mathfrak{s} = s_1+\dots +s_m$ and $d\mathfrak{s} = ds_1\dots ds_m$,  
\begin{align*}
    \textup{I}_2  & \leq \int_{r_B^\alpha}^\infty
    \int_0^{r_B^\alpha} \dots \int_0^{r_B^\alpha}\left(
    \frac{1}{|S_j(\mathcal{B})|}
    \int_{S_j(\mathcal{B})}
    |t^{\ell-1} \partial^l_t \partial_{s_1} \dots\partial_{s_m} W^\alpha_{\lambda,t +\mathfrak{s}} (f)(x)|^q dx\right)^{1/q}d\mathfrak{s}dt\\ 
    & \leq 
    \int_{r_B^\alpha}^\infty
    \int_0^{r_B^\alpha} \dots \int_0^{r_B^\alpha}
    \frac{1}{(t+ \mathfrak{s})^{m+1}}\\
    &\qquad \times
     \left(
    \frac{1}{|S_j(\mathcal{B})|}
    \int_{S_j(\mathcal{B})}
    \left|\left. u^{\ell+m} \partial^{\ell+m}_u W^\alpha_{\lambda,u} (f)(x)\right|_{u=t+\mathfrak{s}}\right|^q dx\right)^{1/q} d\mathfrak{s}dt
    \\ 
    &\lesssim
    f_{q,\mathcal{B}} \left(\int_{r_B^\alpha}^\infty
    \int_0^{r_B^\alpha} \dots \int_0^{r_B^\alpha}
    \frac{1}{(t+ \mathfrak{s})^{m+1}}
    \left( \frac{r_B}{(t+\mathfrak{s})^{1/\alpha}}\right)^{d/q}
    \left( 1+ \frac{(t+\mathfrak{s})^{1/\alpha}}{2^jr_B}\right)^{d/q}\right. \\
    &\quad \times \left.\left( 1+ \frac{2^j r_B}{(t+ \mathfrak{s})^{1/\alpha}}\right)^{-(d+\alpha)(1-\epsilon)} d\mathfrak{s} dt\right)
    \\ &\lesssim f_{q,\mathcal{B}}
    \left(\int_{r_B^\alpha}^{(2^jr_B)^\alpha} 
    \int_0^{r_B^\alpha} \dots \int_0^{r_B^\alpha}
    \frac{1}{t^{m+1}} \left(\frac{r_B}{t^{1/\alpha}}\right)^{d/q} \left( \frac{t^{1/\alpha}}{2^{j}r_B}\right)^{d/q}  d\mathfrak{s} dt\right.
    \\ 
    & \quad \left. + \int_{(2^jr_B)^\alpha}^\infty
    \int_0^{r_B^\alpha} \dots \int_0^{r_B^\alpha}
    \frac{1}{t^{m+1}} \left(\frac{r_B}{t^{1/\alpha}}\right)^{d/q} \left( \frac{t^{1/\alpha}}{2^{j}r_B}\right)^{(d+\alpha)(1-\epsilon)} d\mathfrak{s}  dt\right)\\
    &\lesssim f_{q,\mathcal{B}}\left(
    2^{-j(d+\alpha)(1-\epsilon)} \int_{r_B^\alpha}^{(2^j r_B)^{\alpha}} t^{-m-1-\frac{d}{q^\alpha}+\frac{(d+\alpha)(1-\epsilon)}{\alpha}} dt \right.\\ 
    & \quad \times \left. r_B^{\frac{d}{q} + m\alpha - (d+\alpha)(1-\epsilon)}  +\int_{(2^jr_B)^\alpha}^\infty 2^{-dj/q} r_B^{\alpha m} t^{-m-1}dt 
    \right)
    \\ &\lesssim
     f_{q,\mathcal{B}} \left(
     2^{-j(d+\alpha)(1-\epsilon)} + 2^{-j(\alpha m + d/q)}
     \right)
     \\ & \lesssim
     f_{q,\mathcal{B}} 2^{-j(d+\alpha)(1-\epsilon)},
\end{align*}
provided that $m>\frac{(d+\alpha)(1-\epsilon)}{\alpha} - \frac{d}{q^\alpha}$.
Hence, if $m>\frac{d+\alpha}{\alpha} - \frac{dq}{q^\alpha}$,
by choosing $0<\epsilon<1$ such that $m> \frac{(d+\alpha)(1-\epsilon)}{\alpha} - \frac{d}{q^\alpha} $ the estimates hold.

We can conclude that 
\begin{equation*}
    \textup{I}_2 \leq C 2^{-j(d+\alpha)(1-\epsilon)} f_{q,\mathcal{B}}.
\end{equation*}

On the other hand, we can prove by using~\eqref{eq: 5.4} (see~\cite[p. 15 and p.16]{N}) that
\begin{equation*}
    \textup{I}_1 \leq C 2^{-j(d+\alpha)(1-\epsilon)} f_{q,\mathcal{B}},
\end{equation*}
where $0<\epsilon<1$.

We obtain that
\begin{align*}
        \left( \frac{1}{|S_j(\mathcal{B})|} \int_{S_j(\mathcal{B})} |\mathcal{T}_\ell ( (I-W^\alpha_{\lambda,r_B^\alpha})^m) (f)(x)|^q dx\right)^{1/q}
        & \lesssim 2^{-j(d+\alpha)(1-\epsilon)} 
 f_{q,\mathcal{B}},
    \end{align*}
provided that $0<\epsilon<1$ and $m> \frac{(d+\alpha)(1-\epsilon)}{\alpha} - \frac{d}{q^\alpha}$.

Let $\epsilon \in (0,1)$. According to~\eqref{eq: 5.3} we deduce that 
\begin{align*}
    &\left( \frac{1}{|S_j(\mathcal{B})|} \int_{S_j(\mathcal{B})} |(I-A^\alpha_{\lambda,r^\alpha}) (f)(x)|^q \right)^{1/q}\\
    &\lesssim \sum_{i=1}^m \left( 
    \frac{1}{|S_j(\mathcal{B})|} \int_{|S_j(\mathcal{B})|} |W^{\alpha}_{\lambda,r_B^{\alpha_i}}(f)(x)|^q dx\right)^{1/q}
    \\ &\lesssim \sum_{i=1}^m \left(\frac{1}{i^{1/\alpha}}\right)^{1/p}\left(1+\frac{i^{1/\alpha}}{2^j}\right)^{\alpha/q}\left(1+\frac{2^j}{i^{1/\alpha}}\right)^{-(d+\alpha)(1-\epsilon)} \left(\frac{1}{|\mathcal{B}|}\int_{\mathcal{B}} |f(x)|^p dx\right)^{1/p}\\
    &\lesssim 2^{-j(d+\alpha)(1-\epsilon)}\left(\frac{1}{|\mathcal{B}|}\int_{\mathcal{B}} |f(x)|^p dx\right)^{1/p}.
\end{align*}
We define $\alpha_j=2^{-j(d+\alpha)(1-\epsilon)}$, $j\in \mathbb N$. We have that $\sum_{j=1}^\infty \alpha_j 2^{j\alpha}<\infty$ provided that $0<\epsilon<\alpha/(d+\alpha)$.

By \cite[Proposition~2.3]{BB} (see also \cite[Theorem~6.6]{BZ}) and by proceeding as in the case $\alpha=2$, we conclude that the operator $\mathcal{T}_\ell$ is bounded on $L^p(\hs, w)$ for every $1<p<\infty$ and $w\in A_p(\hs)$. Hence, the $\rho$-variation operator $\mathcal{V}_\rho\left(\{t^k\partial_t^kW^{\alpha}_{\lambda,t}\}_{t>0}\right)$ is also bounded on $L^p(\hs, w)$ for every $1<p<\infty$ and $w\in A_p(\hs)$.

\subsection*{Statements and Declarations} 

\subsubsection*{Funding} The first author is partially supported by grant PID2019-106093GB-I00 from the Spanish Government. The second author is partially supported by grants PICT-2019-2019-00389 (Agencia Nacional de Promoción Científica y Tecnológica), PIP-1220200101916O (Consejo Nacional de Investigaciones Científicas y Técnicas) and CAI+D 2019-015 (Universidad Nacional del Litoral)

\subsubsection*{Competing Interests} The authors have no relevant financial or non-financial interests to disclose.

\subsubsection*{Author Contributions} All authors whose names appear on the submission made substantial contributions to the conception and design of the work, drafted the work and revised it critically, approved this version to be published, and agree to be accountable for all aspects of the work in ensuring that questions related to the accuracy or integrity of any part of the work are appropriately investigated and resolved.

\subsection*{Data availability} Data sharing not applicable to this article as no datasets were generated or analysed during the current study.


\begin{thebibliography}{10}

\bibitem{AJS}
{\sc Akcoglu, M.~A., Jones, R.~L., and Schwartz, P.~O.}
\newblock Variation in probability, ergodic theory and analysis.
\newblock {\em Illinois J. Math. 42}, 1 (1998), 154--177.

\bibitem{ABR}
{\sc Almeida, V., Betancor, J.~J., and Rodr\'{\i}guez-Mesa, L.}
\newblock Variation operators associated with the semigroups generated by
  {S}chr\"{o}dinger operators with inverse square potentials.
\newblock {\em Anal. Appl. (Singap.) 21}, 2 (2023), 353--383.

\bibitem{AB}
{\sc Arendt, W., and Bukhvalov, A.~V.}
\newblock Integral representations of resolvents and semigroups.
\newblock {\em Forum Math. 6}, 1 (1994), 111--135.

\bibitem{BORSS}
{\sc Beltran, D., Oberlin, R., Roncal, L., Seeger, A., and Stovall, B.}
\newblock Variation bounds for spherical averages.
\newblock {\em Math. Ann. 382}, 1-2 (2022), 459--512.

\bibitem{BZ}
{\sc Bernicot, F., and Zhao, J.}
\newblock New abstract {H}ardy spaces.
\newblock {\em J. Funct. Anal. 255}, 7 (2008), 1761--1796.

\bibitem{BCT}
{\sc Betancor, J.~J., Crescimbeni, R., and Torrea, J.~L.}
\newblock The {$\rho$}-variation of the heat semigroup in the {H}ermitian
  setting: behaviour in {$L^\infty$}.
\newblock {\em Proc. Edinb. Math. Soc. (2) 54}, 3 (2011), 569--585.

\bibitem{BD}
{\sc Bogdan, K., and Dyda, B.~o.}
\newblock The best constant in a fractional {H}ardy inequality.
\newblock {\em Math. Nachr. 284}, 5-6 (2011), 629--638.

\bibitem{Bo2}
{\sc Bourgain, J.}
\newblock On the maximal ergodic theorem for certain subsets of the integers.
\newblock {\em Israel J. Math. 61}, 1 (1988), 39--72.

\bibitem{Bo3}
{\sc Bourgain, J.}
\newblock On the pointwise ergodic theorem on {$L^p$} for arithmetic sets.
\newblock {\em Israel J. Math. 61}, 1 (1988), 73--84.

\bibitem{Bo}
{\sc Bourgain, J.}
\newblock Pointwise ergodic theorems for arithmetic sets.
\newblock {\em Inst. Hautes \'{E}tudes Sci. Publ. Math.}, 69 (1989), 5--45.
\newblock With an appendix by the author, Harry Furstenberg, Yitzhak Katznelson
  and Donald S. Ornstein.

\bibitem{BB}
{\sc Bui, T.~A., and Bui, T.~Q.}
\newblock Maximal regularity of parabolic equations associated to generalized
  {H}ardy operators in weighted mixed-norm spaces.
\newblock {\em J. Differential Equations 303\/} (2021), 547--574.

\bibitem{BuiD}
{\sc Bui, T.~A., and D'Ancona, P.}
\newblock Generalized {H}ardy operators.
\newblock {\em Nonlinearity 36}, 1 (2023), 171--198.

\bibitem{BM23}
{\sc Bui, T.~A., and Merz, K.}
\newblock Equivalence of {S}obolev norms in {L}ebesgue spaces for {H}ardy
  operators in a half-space, 2023.
\newblock \href{https://arxiv.org/abs/2309.02928}{arXiv:2309.02928}.

\bibitem{CJRW1}
{\sc Campbell, J.~T., Jones, R.~L., Reinhold, K., and Wierdl, M.}
\newblock Oscillation and variation for the {H}ilbert transform.
\newblock {\em Duke Math. J. 105}, 1 (2000), 59--83.

\bibitem{CJRW2}
{\sc Campbell, J.~T., Jones, R.~L., Reinhold, K., and Wierdl, M.}
\newblock Oscillation and variation for singular integrals in higher
  dimensions.
\newblock {\em Trans. Amer. Math. Soc. 355}, 5 (2003), 2115--2137.

\bibitem{CKS10}
{\sc Chen, Z.-Q., Kim, P., and Song, R.}
\newblock Two-sided heat kernel estimates for censored stable-like processes.
\newblock {\em Probab. Theory Related Fields 146}, 3-4 (2010), 361--399.

\bibitem{CK03}
{\sc Chen, Z.-Q., and Kumagai, T.}
\newblock Heat kernel estimates for stable-like processes on {$d$}-sets.
\newblock {\em Stochastic Process. Appl. 108}, 1 (2003), 27--62.

\bibitem{Da}
{\sc Davies, E.~B.}
\newblock {\em Heat kernels and spectral theory}, vol.~92 of {\em Cambridge
  Tracts in Mathematics}.
\newblock Cambridge University Press, Cambridge, 1989.

\bibitem{DPDU}
{\sc Di~Plinio, F., Do, Y.~Q., and Uraltsev, G.~N.}
\newblock Positive sparse domination of variational {C}arleson operators.
\newblock {\em Ann. Sc. Norm. Super. Pisa Cl. Sci. (5) 18}, 4 (2018),
  1443--1458.

\bibitem{DL}
{\sc Do, Y., and Lacey, M.}
\newblock Weighted bounds for variational {F}ourier series.
\newblock {\em Studia Math. 211}, 2 (2012), 153--190.

\bibitem{DR}
{\sc Duong, X.~T., and Robinson, D.~W.}
\newblock Semigroup kernels, {P}oisson bounds, and holomorphic functional
  calculus.
\newblock {\em J. Funct. Anal. 142}, 1 (1996), 89--128.

\bibitem{FM23JFA}
{\sc Frank, R.~L., and Merz, K.}
\newblock On {S}obolev norms involving {H}ardy operators in a half-space.
\newblock {\em J. Funct. Anal. 285}, 10 (2023), Paper No. 110104.

\bibitem{GT}
{\sc Gillespie, T.~A., and Torrea, J.~L.}
\newblock Dimension free estimates for the oscillation of {R}iesz transforms.
\newblock {\em Israel J. Math. 141\/} (2004), 125--144.

\bibitem{JW}
{\sc Jones, R.~L., and Wang, G.}
\newblock Variation inequalities for the {F}ej\'{e}r and {P}oisson kernels.
\newblock {\em Trans. Amer. Math. Soc. 356}, 11 (2004), 4493--4518.

\bibitem{KMVZZ}
{\sc Killip, R., Miao, C., Visan, M., Zhang, J., and Zheng, J.}
\newblock Sobolev spaces adapted to the {S}chr\"{o}dinger operator with
  inverse-square potential.
\newblock {\em Math. Z. 288}, 3-4 (2018), 1273--1298.

\bibitem{K1}
{\sc Krause, B.}
\newblock Polynomial ergodic averages converge rapidly: Variations on a theorem
  of {B}ourgain, 2014.
\newblock \href{https://arxiv.org/abs/1402.1803}{arXiv:1402.1803}.

\bibitem{K3}
{\sc Krause, B.}
\newblock Discrete fractional integration operators along the primes, 2019.
\newblock \href{https://arxiv.org/abs/1905.02767}{arXiv:1905.02767}.

\bibitem{K2}
{\sc Krause, B.}
\newblock Pointwise ergodic theory: examples and entropy, 2023.
\newblock \href{https://arxiv.org/abs/2301.01511}{arXiv:2301.01511}.

\bibitem{KMT}
{\sc Krause, B., Mirek, M., and Tao, T.}
\newblock Pointwise ergodic theorems for non-conventional bilinear polynomial
  averages.
\newblock {\em Ann. of Math. (2) 195}, 3 (2022), 997--1109.

\bibitem{LeMX}
{\sc Le~Merdy, C., and Xu, Q.}
\newblock Strong {$q$}-variation inequalities for analytic semigroups.
\newblock {\em Ann. Inst. Fourier (Grenoble) 62}, 6 (2012), 2069--2097 (2013).

\bibitem{Lep}
{\sc L\'{e}pingle, D.}
\newblock La variation d'ordre {$p$} des semi-martingales.
\newblock {\em Z. Wahrscheinlichkeitstheorie und Verw. Gebiete 36}, 4 (1976),
  295--316.

\bibitem{MTX}
{\sc Ma, T., Torrea, J.~L., and Xu, Q.}
\newblock Weighted variation inequalities for differential operators and
  singular integrals in higher dimensions.
\newblock {\em Sci. China Math. 60}, 8 (2017), 1419--1442.

\bibitem{MT}
{\sc Mas, A., and Tolsa, X.}
\newblock Variation and oscillation for singular integrals with odd kernel on
  {L}ipschitz graphs.
\newblock {\em Proc. Lond. Math. Soc. (3) 105}, 1 (2012), 49--86.

\bibitem{MS}
{\sc Mehlhop, Nathan amd~Słomian, W.}
\newblock Oscillation and jump inequalities for the polynomial ergodic averages
  along multi-dimensional subsets of primes.
\newblock {\em Math. Ann.\/} (2023).

\bibitem{Me}
{\sc Merz, K.}
\newblock On complex-time heat kernels of fractional {S}chr\"{o}dinger
  operators via {P}hragm\'{e}n-{L}indel\"{o}f principle.
\newblock {\em J. Evol. Equ. 22}, 3 (2022), Paper No. 62, 30.

\bibitem{MSZ}
{\sc Mirek, M., Stein, E.~M., and Zorin-Kranich, P.}
\newblock Jump inequalities via real interpolation.
\newblock {\em Math. Ann. 376}, 1-2 (2020), 797--819.

\bibitem{MiT}
{\sc Mirek, M., and Trojan, B.}
\newblock Cotlar's ergodic theorem along the prime numbers.
\newblock {\em J. Fourier Anal. Appl. 21}, 4 (2015), 822--848.

\bibitem{N}
{\sc Nader, G.}
\newblock Variation and oscillation operators associated to semigroup generated
  by {S}chr\"{o}dinger operator with fractional power.
\newblock {\em J. Math. Anal. Appl. 523}, 1 (2023), Paper No. 127000, 26.

\bibitem{OSTTW}
{\sc Oberlin, R., Seeger, A., Tao, T., Thiele, C., and Wright, J.}
\newblock A variation norm {C}arleson theorem.
\newblock {\em J. Eur. Math. Soc. (JEMS) 14}, 2 (2012), 421--464.

\bibitem{PX}
{\sc Pisier, G., and Xu, Q.~H.}
\newblock The strong {$p$}-variation of martingales and orthogonal series.
\newblock {\em Probab. Theory Related Fields 77}, 4 (1988), 497--514.

\bibitem{Qi}
{\sc Qian, J.}
\newblock The {$p$}-variation of partial sum processes and the empirical
  process.
\newblock {\em Ann. Probab. 26}, 3 (1998), 1370--1383.

\bibitem{Sch}
{\sc Schnaubelt, R.}
\newblock Lecture notes: Evolution equations.
\newblock
  \url{https://www.math.kit.edu/iana3/~schnaubelt/media/evgl-skript.pdf}, 2023.

\bibitem{Wi}
{\sc Wierdl, M.}
\newblock Pointwise ergodic theorem along the prime numbers.
\newblock {\em Israel J. Math. 64}, 3 (1988), 315--336 (1989).

\bibitem{ZK}
{\sc Zorin-Kranich, P.}
\newblock Variation estimates for averages along primes and polynomials.
\newblock {\em J. Funct. Anal. 268}, 1 (2015), 210--238.

\end{thebibliography}
\end{document}